\def\draft{n}
\theoremstyle{plain}
\newtheorem{theorem}{Theorem}
\newtheorem*{lawrence}{Lawrence's Theorem}
\newtheorem{proposition}{Proposition}[section]
\newtheorem{lemma}[proposition]{Lemma}
\newtheorem{corollary}[proposition]{Corollary}
\theoremstyle{definition}
\newtheorem{definition}[proposition]{Definition}
\theoremstyle{remark}
\newtheorem{example}[proposition]{Example}
\def\printname#1{
        \if\draft y
                \smash{\makebox[0pt]{\hspace{-0.5in}
                        \raisebox{8pt}{\tt\tiny #1}}}
        \fi
}
\newlength{\standardunitlength}
\long\def\@makecaption#1#2{%
     \vskip 10pt

\setbox\@tempboxa\hbox{
       \small\sf{\bfcaptionfont #1. }\ignorespaces #2}%
     \ifdim \wd\@tempboxa >\captionwidth {%
         \rightskip=\@captionmargin\leftskip=\@captionmargin
         \unhbox\@tempboxa\par}%
       \else
         \hbox to\hsize{\hfil\box\@tempboxa\hfil}%
     \fi}
\font\bfcaptionfont=cmssbx10 scaled \magstephalf
\newdimen\@captionmargin\@captionmargin=2\parindent
\newdimen\captionwidth\captionwidth=\hsize
\def\lbl#1{\label{#1}\printname{#1}}
\newcommand{\Q}{\mathbb Q}
\newcommand{\Z}{\mathbb Z}
\newcommand{\Psibar}{\overline{\Psi}}
\newcommand{\ba}{\begin{align*}}
\newcommand{\ea}{\end{align*}}
\newcommand{\ip}[2]{\langle #1,#2\rangle}
\def\BZ{\mathbb Z}
\def\BQ{\mathbb Q}
\def\BC{\mathbb C}
\def\P{\mathcal P}
\def\La{\Lambda}
\def\faceof{\prec}
\def\la{\langle}
\def\ra{\rangle}
\def\Cone{\mathrm{Cone}}
\def\cone{\mathrm{cone}}
\def\calL{\mathcal{L}}
\def\calC{\mathcal{C}}
\def\calM{\mathcal{M}}
\def\lin{\mathrm{lin}}
\def\Supp{\mathrm{Supp}}
\def\lin{\mathrm{lin}}
\def\Tan{\mathrm{Tan}}
\def\La{\Lambda}
\def\lin{\mathrm{lin}}
\def\aff{\mathrm{aff}}
\def\calC{\mathcal{C}}
\def\itilde{\tilde{I}}
\def\stilde{\tilde{S}}
\def\mutilde{\tilde{\mu}}
\def\r{r^\Psi} 
\def\rsigma{r^\Psi(\sigma)}
\def\Td{{\rm Td}}
\def\Div{{\rm Div}}
\def\Hom{{\rm Hom}}
\def\Cone{{\rm Cone}}
\begin{document}


\title[Cycle-level products in equivariant cohomology of toric varieties]{Cycle-level products in equivariant cohomology of toric varieties}
\author{Benjamin P. Fischer}
\address{Department of Mathematics and Statistics\\
Boston University\\
111 Cummington Street\\
Boston, MA 02215}
\email{benfischeris@gmail.com}
\author{James E. Pommersheim}
\address{Mathematics Department \\
         Reed College \\
         3203 SE Woodstock Boulevard \\
Portland, Oregon 97202-8199}
\email{jamie@reed.edu}

\thanks{
{\em Key words and phrases: toric varieties, Chow groups, equivariant cohomology, polytopes, Euler-MacLaurin formula, 
lattice points.  
}
}

\date\today


\begin{abstract}

In this paper, we define an action of the group of equivariant Cartier divisors on a toric variety $X$ on the equivariant cycle groups of $X$, arising naturally from a choice of complement map on the underlying lattice. If $X$ is nonsingular, this gives a lifting of the multiplication in equivariant cohomology to the level of equivariant cycles.  As a consequence, one naturally obtains an equivariant cycle representative of the equivariant Todd class of any toric variety.  These results extend to equivariant cohomolgy the results of \cite{Th} and \cite{PT}.  In the case of a complement map arising from an inner product, we show that the equivariant cycle Todd class obtained from our construction is identical to the result of the inductive, combinatorial construction of Berline-Vergne \cite{BV1, BV2}.  In the case of arbitrary complement maps, we show that our Todd class formula yields the local Euler-Maclarurin formula introduced in \cite{GP}.

\end{abstract}

\maketitle

\tableofcontents

\section{Introduction}
\lbl{sec.intro}

\subsection{Overview}
\lbl{sub.overview}

Intersection theory on a nonsingular algebraic variety provides a natural intersection product of cycles modulo rational equivalence.  One might wonder in what circumstances there is a reasonable lifting of this product to the level of cycles, so that any two cycles on $X$ on can be multiplied to produce a well-defined cycle on $X$ in a natural way that respects rational equivalence.  If the cycles intersect properly, there is a natural product, but if the intersection is not proper, then one must settle for knowing the product only as a cycle modulo rational equivalence. More generally, for arbitrary (possibly singular) algebraic varieties, intersection theory provides an action of the Picard group of an arbitrary algebraic variety on the Chow groups of the variety. One can ask if there is a natural lifting of the action of the Picard group to the level of algebraic cycles.  For toric varieties, such an action was constructed in \cite{Th}.  The action depends on the choice 
 of a complement map, which is a certain global choice of linear subspaces.  (See Section \ref{sub.statementofresults} for details.)

One of the motivations for a cycle-level intersection theory is that it leads naturally to a cycle expression for the Todd class of a toric variety.   Indeed, the Todd class of a nonsingular toric variety has a well-known expression as a product of torus-invariant cycles, so given the cycle-level multiplication derived from a complement map, one obtains a natural expression for the Todd class of a toric variety.  This is worked out in \cite{PT}, where one finds a cycle expression for the Todd class of an arbitrary toric variety with rational coefficents depending only on the local information in the fan, giving an answer to a question of Danilov. From this Todd class formula, via a well-known application of Riemann-Roch, one obtains a local formula for the number of lattice points in a integral polytope. 

The purpose of this paper is to extend the results of \cite{Th} and \cite{PT} to equivariant cohomology.  In particular, given a complement map, we produce a natural action of the equivariant divisor group on the equivariant cycle group.  In the simplicial case, we obtain a natural ring structure on the group of equivariant cycles tensored with $\Q$. As a consequence of these results, for any choice of complement map, we obtain a natural local and computable expression for the equivariant Todd class of an arbitrary toric variety.   We show that the expression for the equivariant Todd class so obtained, in the special case of a complement map arising from an inner product, is equivalent to that obtained by \cite{BV2} by an entirely different, purely combinatorial recipe.   

Finally, we relate our expression of the Todd class to the Euler-Maclaurin formulas of \cite{GP}.  In that paper, it is shown that a complement map naturally gives rise to a function $\mu$ on cones that interpolates between exponential sums and integrals, and hence gives rise to a local Euler-Maclaurin formula.   In this paper, we show that the functions $\mu$ arising from our Todd class construction are identical to the functions $\mu$ constructed in \cite{GP} through a different inductive combinatorial method.   As a corollary, we prove a conjecture of \cite{GP} which asserts that the constant term of the power series constructed in \cite{BV2} and \cite{GP} agree with the Todd class coefficients constructed in \cite{PT}.

\subsection{Definition of the action and basic properties}
\lbl{sub.statementofresults}

We now give the details of our construction.  Let $X = X_{\Sigma}$ be a toric variety defined by a fan $\Sigma$ in a lattice $N$, with associated torus $T$.  We follow notation that is standard in the theory of toric varieties (cf. \cite{Fu}.)  Let $M=\Hom(N,\BZ)$ denote the lattice dual to $N$, and let $\Lambda=\Z[M]$.  For each cone $\sigma\in\Sigma$, let $N_{\sigma}$ denote the lattice $L_{\sigma} \cap N$, where $L_{\sigma}$ is the linear span of $\sigma$, and let $N(\sigma)= N/N_{\sigma}$.  Dual to $N(\sigma)$ and $N_{\sigma}$ are the lattices $M(\sigma)= M\cap \sigma^\perp$ and $M_{\sigma} = M/M(\sigma)$.   We will use $\ip{\cdot}{\cdot}$ to denote the natural pairing $M\times N\rightarrow \BZ$ or indeed $M(\sigma)\times N(\sigma)\rightarrow \BZ$ for any $\sigma\in\Sigma$.

For any abelian group $L$ (such as $M$, $N$, $\Lambda$, etc.), we denote $L_\Q := L \otimes \Q$. 

If a cone $\tau\in\Sigma$ contains $\sigma$ as a maximal proper face, we will write $\tau\rightarrow\sigma$.  In this case, the image of $\tau$ in $N(\sigma)$ is a 1-dimensional cone, and we use $n_{\tau,\sigma}$ to denote the unique primitive element of $N(\sigma)$ contained in this cone. 

Recall that for each $\sigma\in\Sigma$ there is a $T$-invariant subvariety $V(\sigma)\subset X$.  

We wish to define a cycle-level action of the group $\Div_T(X)$ of equivariant $\Q$-Cartier divisors on the equivariant cycle groups $Z^T_*(X)$.
 
\begin{definition}  Let $X$ be the toric variety associated to a fan $\Sigma$.  The  {\it cycle group} of $X$, denoted $Z_*(X)$, is the free abelian group generated by $ \{ V(\sigma) \ |\  \sigma \in\Sigma\}$.  The {\it equivariant cycle group (with rational coefficients)}  $Z^T_*(X)$ is defined as $Z^T_*(X) := Z_*(X) \otimes \Lambda_\Q$. 

\end{definition}

Since the classes of invariant cycles generate the Chow groups of $X$, there is a natural surjection $Z_*(X)\rightarrow A_*(X)$.
Similarly, there is a natural surjection $Z_*^T(X)\rightarrow A_*^T(X)_\Q$; indeed, \cite{Br2} gives a presentation of the equivariant Chow groups $A_*^T(X)$ as a quotient of $Z_*^T(X)$. For $\sigma\in\Sigma$, we use $V_{\sigma}$ to denote the cycle $V(\sigma)$ considered as an element of $Z^T_*(X)$.

Denote by $\Div(X)$ the group of $T$-invariant $\Q$-Cartier divisors.  Recall that a $T$-invariant $\Q$-Cartier divisor on $X$ is given by local equations $\{d_{\sigma}\}_{\sigma\in\Sigma}$, where each $d_{\sigma}\in M_{\sigma, \Q}$.  These local equations are compatible in the sense that if $\sigma,\tau\in\Sigma$ with $\sigma\subset\tau$, then $d_{\sigma}$ is the image of $d_{\tau}$ under the natural map $M_{\tau, \Q} \rightarrow M_{\sigma, \Q}$.   Let $\Div_T(X)=\Div(X)\otimes\Lambda_\Q$. This is the $\Lambda_\Q$-module that acts on $Z_*^T(X)_\Q$, as we assert below.

The idea of a {\it complement map} was introduced in \cite{Th}.  We give here a quick definition which is easily seen to be equivalent to the notion of rigid complement map used in \cite{Th}.

\begin{definition}
Let $\calL$ be a set of $\Q$-subspaces of $N_\Q$ which contains $N_\Q$.  Then a {\it complement map} $\Psi$ assigns, to each $L_1,L_2 \in \calL$ such that $L_1 \subset L_2$, a section $i^\Psi: L_1^* \rightarrow L_2^*$ of the natural (restriction) map $L_2^* \rightarrow L_1^*$.  The sections are assumed to be transitive with respect to inclusion.  Given a fan $\Sigma$ in $N$, we say that $\Sigma$ is {\it $\Psi$-generic} if $\{N_{\sigma, \Q} \}_{\sigma\in\Sigma}$ is a subset of $\calL$. 
\end{definition}

In particular, if $\Sigma$ is $\Psi$-generic, then we obtain a section $i^\Psi: M_{\sigma, \Q} \rightarrow M_\Q$ for each $\sigma \in \Sigma$.  If $d\in M_{\sigma, \Q}$, denote its image $i^{\Psi}(d) \in M_\Q$ by $d^{\Psi}$.   Denote the $\Q$-subspace of $M_\Q$ generated by $i^\Psi(\sigma)$ as $\Psi(\sigma)$.  One easily checks that $\Psi(\sigma) \oplus M(\sigma)_\Q = M_\Q$.  In this sense, we may say that a complement map is a choice of complementary spaces.

The following theorem asserts that a complement map gives rise to a natural action of $\Div_T(X)$ on $Z^T_*(X)$.   In the formula that defines this action, it is useful to note that if $D$ is a Cartier divisor with local equations $\{d_{\sigma}\}_{\sigma\in\Sigma}$ and $\tau\rightarrow\sigma$, then $d_\tau$ and $d_\sigma^\Psi$ have the same image in $M_{\sigma, \Q}$.   Thus we can consider the difference $d_\tau-d_\sigma^\Psi$ to be an element of $M(\sigma)_\Q$.  As such the expression $\ip{d_\tau-d_\sigma^\Psi}{n_{\tau,\sigma}}$ is well-defined.

\begin{theorem}
\lbl{thm.action}

Let $X=X(\Sigma)$ be the toric variety associated to a fan $\Sigma$ in $N$. Given a generic complement map $\Psi$ on $N$, there is a natural action of the group $\Div_T(X)$ on $Z^T_*(X)$, given as follows:  for $D\in \Div(X)$ with local equations $\{d_{\sigma}\}_{\sigma\in\Sigma}$:

\begin{equation}
\lbl{eq.action}
D \cdot V_{\sigma}  = \sum_{\tau:\tau \rightarrow \sigma} \ip{d_\tau-d_\sigma^\Psi}{n_{\tau,\sigma}}  V_{\tau} + d_\sigma^\Psi V_{\sigma}.
\end{equation}

The action satisfies the following properties:

\begin{itemize}

\item (Lifting) The action defined above is a lifting of the action of the Picard group of $X$ on the equivariant Chow groups $A_*^T(X)_\Q$.

\item (Commutativity) For any $D, E \in \Div_T(X)$ and $C \in Z_*^T(X)_\Q$,  we have $D \cdot (E \cdot C) = E \cdot (D \cdot C)$.

\item (Compatibility with non-equivariant cycle-level intersection)  The action defined above is a lifting of the cycle-level action of $\Div(X)$ on $Z(X)_\Q$ defined in \cite{Th}.

\end{itemize}  

\end{theorem}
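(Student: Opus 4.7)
The plan is to verify the three listed properties in turn, after first establishing that \eqref{eq.action} really defines a $\Lambda_\Q$-linear endomorphism for each $D\in\Div(X)$. The pairing $\ip{d_\tau-d_\sigma^\Psi}{n_{\tau,\sigma}}$ is well-defined as noted in the excerpt, the summand $d_\sigma^\Psi V_\sigma$ makes sense once $d_\sigma^\Psi\in M_\Q$ is viewed inside $\Lambda_\Q$ via the standard embedding, and the formula is manifestly $\Q$-linear in $D$. Extension by $\Lambda_\Q$-linearity then yields the desired action of $\Div_T(X)=\Div(X)\otimes\Lambda_\Q$ on $Z^T_*(X)$.

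The main technical content is commutativity, $D\cdot(E\cdot V_\sigma)=E\cdot(D\cdot V_\sigma)$. Iterating \eqref{eq.action}, the contributions split according to the target cone, namely $V_\sigma$ itself, $V_\tau$ with $\tau\to\sigma$, and $V_\rho$ with $\sigma\subset\tau\subset\rho$ and $\dim\rho=\dim\sigma+2$. The $V_\sigma$-coefficient $d_\sigma^\Psi e_\sigma^\Psi$ is symmetric because $\Lambda_\Q$ is commutative. After collecting terms, the $V_\tau$-coefficient difference takes the form
\[\ip{v}{n_{\tau,\sigma}}\,u-\ip{u}{n_{\tau,\sigma}}\,v,\qquad u:=d_\tau^\Psi-d_\sigma^\Psi,\ v:=e_\tau^\Psi-e_\sigma^\Psi,\]
and the key small lemma is that transitivity of the complement map forces both $u$ and $v$ into the one-dimensional subspace $i^\Psi\bigl(\ker(M_{\tau,\Q}\to M_{\sigma,\Q})\bigr)\subset M(\sigma)_\Q$; once $u$ and $v$ are parallel, the displayed expression vanishes.

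The hardest case is the $V_\rho$-coefficient,
\[\sum_{\tau:\,\sigma\subset\tau\subset\rho}\ip{e_\tau-e_\sigma^\Psi}{n_{\tau,\sigma}}\,\ip{d_\rho-d_\tau^\Psi}{n_{\rho,\tau}},\]
whose symmetry under $D\leftrightarrow E$ is not formal. I would reduce the verification to the two-dimensional quotient cone $\rho/N_\sigma$, which is automatically simplicial with exactly two intermediate rays $\tau_1,\tau_2$, rewrite each factor using the $\Psi$-lifts to $M_\Q$, and then check the resulting bilinear identity in a basis of $M_\rho$ adapted to $\Psi(\tau_i)$. Transitivity of $\Psi$ collapses the identity to a two-variable linear-algebra statement verifiable by a short direct computation; this codimension-two symmetry is the single genuine obstacle of the theorem.

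The lifting property then follows by projecting to $A^T_*(X)_\Q$ via the presentation of \cite{Br2} and comparing with the known action of the Picard group: the summand $d_\sigma^\Psi V_\sigma$ and the $d_\sigma^\Psi$-correction inside $\ip{d_\tau-d_\sigma^\Psi}{n_{\tau,\sigma}}$ both land in the image of Brion's relations, so \eqref{eq.action} descends to the usual intersection product. Compatibility with the non-equivariant construction of \cite{Th} is obtained by composing with the augmentation $\Lambda_\Q\to\Q$ that kills the image of $M_\Q$: the summand $d_\sigma^\Psi V_\sigma$ dies and the formula specialises verbatim to the cycle-level product of \cite{Th}. All three assertions thus reduce, modulo routine bookkeeping, to the commutativity identity just discussed.
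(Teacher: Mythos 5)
Your proposal is correct in outline and, for most steps, runs parallel to the paper's argument: the same term-by-term decomposition of $D\cdot(E\cdot V_\sigma)$ by target cone, the same observation that the $V_\tau$-discrepancy has the form $\ip{v}{n_{\tau,\sigma}}u-\ip{u}{n_{\tau,\sigma}}v$ with $u=d_\tau^\Psi-d_\sigma^\Psi$ and $v=e_\tau^\Psi-e_\sigma^\Psi$ both forced by transitivity of $\Psi$ into the one-dimensional space $\Psi(\tau)\cap M(\sigma)_\Q$, and the same treatment of compatibility via the augmentation $\Lambda_\Q\to\Q$ (the paper additionally reconciles $\ip{d_\tau-d_\sigma^\Psi}{n_{\tau,\sigma}}$ with Thomas's $\ip{\pi_\sigma(d_\tau)}{n_{\tau,\sigma}}$, which your ``verbatim'' glosses over but which is immediate). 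The one genuine divergence is the codimension-two coefficient: the paper notes that this block of the double product carries no $\Lambda_\Q$-valued coefficients and is literally the non-equivariant cycle-level product, so its symmetry is imported wholesale from the commutativity theorem of \cite{Th}; you instead propose to reprove it directly by passing to the two-dimensional quotient cone in $N(\sigma)$ and checking a bilinear identity. Your reduction is sound (the interval between $\sigma$ and $\rho$ does contain exactly two intermediate facets), and the resulting computation does close up, but be aware that what you call ``a short direct computation'' is precisely the nontrivial commutativity lemma of \cite{Th} -- it is the one piece of real content in the theorem and the only step your proposal sketches rather than executes, so it should either be written out or cited. A second, smaller difference concerns the lifting property: the paper obtains it for free because the formula is \emph{defined} as $([D]-E_\sigma^\Psi)\cdot V_\sigma$ with $E_\sigma^\Psi$ in the relations ideal $J'$, whereas you verify it after the fact; your phrasing that the correction terms ``land in the image of Brion's relations'' should be sharpened to say that the difference between \eqref{eq.action} and the naive product $[D]\cdot V_\sigma$ is a multiple of a relation, which is what actually makes the formula descend.
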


\subsection{Ring structure in the simplicial case}
\lbl{sub.simplicialresults}

In the case when $X$ is a simplicial toric variety, the action defined above provides a ring structure on $Z^T_*(X)$.  As we show in Section \ref{sec.simplicial}, this is a consequence of Theorem \ref{thm.action} together with the well-known fact that every Weil divisor on $X$ is represented by a Cartier divisor.  The ring structure obtained is a lifting of the product structure on the equivariant cohomology ring $A^*_T(X)_\Q$, which is described (over $\Z$) in \cite{Fu2}. We now describe the ring stucture on $Z^T_*(X)$ explicitly.

\begin{theorem}
\lbl{thm.ringstructure}

Let $\Sigma$ be a simplicial fan and let $X$ be the associated toric variety.  Let $\Psi$ be a generic complement map. Then the action of Theorem \ref{thm.action} induces a commutative ring structure on $Z^T_*(X)$.  This product is a lifting of the product on the equivariant cohomology ring $A^*_T(X)_{\BQ}$ under the natural surjection $Z_*^T(X)_{\BQ}\rightarrow A^*_T(X)_{\BQ}$.

Furthermore, the ring structure on $Z^T_*(X)$ is described explicitly as follows.  Let $\Sigma_{(1)} = \{\rho_1, \dots, \rho_s\}$ be the $1$-dimensional cones of $\Sigma$, and let $D_i:=V_{\rho_i}\in Z^T_*(X)$ be the corresponding cycles.  Denote by $n_i$ the primitive element of $N \cap\rho_i$.  Then

$$Z^T_*(X) \cong \frac{\Lambda_\BQ[D_1,\dots, D_s]}{I + J^\Psi}$$

where $I$ is the Stanley-Reisner ideal

$$I = \langle D_{i_1} D_{i_2} \dots D_{i_k} : \rho_{i_1} + \rho_{i_2}+ \dots + \rho_{i_k} \notin \Sigma \rangle $$

and

$$J^\Psi = \biggl\langle D_{i_1} D_{i_2} \dots D_{i_k}\biggr(\sum_{j=1}^s \ip{m}{n_j} D_j - m\biggl) : \rho_{i_1} + \rho_{i_2} + \dots + \rho_{i_k} = \sigma \in \Sigma, m \in \Psi( \sigma) \biggr\rangle . $$

\end{theorem}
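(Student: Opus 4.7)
The plan is to realize $Z^T_*(X)$ as a cyclic module over $R := \Lambda_\Q[D_1, \ldots, D_s]$ with cyclic generator $V_0$ (the fundamental class of $X = V(0)$), and then identify the annihilator of $V_0$ with $I + J^\Psi$. Since $\Sigma$ is simplicial, every Weil divisor on $X$ is $\Q$-Cartier, so each $D_i = V_{\rho_i}$ gives an element of $\Div_T(X)$ that acts on $Z^T_*(X)$ via Theorem \ref{thm.action}; combined with $\Lambda_\Q$-coefficient multiplication, this produces an $R$-module structure whose commutativity is exactly the commutativity clause of Theorem \ref{thm.action}. The ring structure on $Z^T_*(X)$ is then the one inherited from $R/\mathrm{Ann}(V_0)$ via the bijection $p + \mathrm{Ann}(V_0) \leftrightarrow p \cdot V_0$, and commutativity and associativity are automatic.

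I would first check that the evaluation map $\phi : R \to Z^T_*(X)$, $p \mapsto p \cdot V_0$, is surjective. Applying \eqref{eq.action} to $D_i \cdot V_\sigma$, observe that when $\rho_i \not\subset \sigma$ the local equation of $D_i$ vanishes at $\sigma$ and at every $\tau \to \sigma$ except possibly $\tau = \sigma + \rho_i$: if $\sigma + \rho_i \in \Sigma$ the unique surviving term gives a nonzero rational multiple of $V_{\sigma + \rho_i}$ (the coefficient being a simplicial multiplicity), and otherwise $D_i \cdot V_\sigma = 0$. By induction on $\dim \sigma$, every $V_\sigma$ is a nonzero rational multiple of $D_{i_1} \cdots D_{i_k} \cdot V_0$ where $\rho_{i_1}, \ldots, \rho_{i_k}$ are the rays of $\sigma$, establishing surjectivity.

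Next I would verify $I + J^\Psi \subset \mathrm{Ann}(V_0)$. For the Stanley-Reisner generators, use commutativity to reorder so that $D_{i_1} \cdots D_{i_k}$ begins with a minimal non-face of $\Sigma$; the inductive computation above then returns zero at the step the non-face is completed. For the $J^\Psi$ generators, the decisive observation is that $\sum_j \langle m, n_j\rangle D_j$ equals the principal divisor $\mathrm{div}(\chi^m)$, whose local equation at each cone $\sigma'$ is the image of $m$ in $M_{\sigma',\Q}$. When $m \in \Psi(\sigma)$, the section property gives $d_\sigma^\Psi = m$, and compatibility of local equations then forces $d_\tau - d_\sigma^\Psi = 0$ for every $\tau \to \sigma$; thus \eqref{eq.action} collapses to $\bigl(\sum_j \langle m, n_j\rangle D_j\bigr) \cdot V_\sigma = m V_\sigma$. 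Since $V_\sigma$ is a nonzero rational multiple of $D_{i_1} \cdots D_{i_k} \cdot V_0$, this delivers the generator of $J^\Psi$ into $\mathrm{Ann}(V_0)$.

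The main obstacle is the reverse containment $\mathrm{Ann}(V_0) \subset I + J^\Psi$, which I would establish by bounding $\mathrm{rank}_{\Lambda_\Q} R/(I + J^\Psi) \leq |\Sigma|$. For each $\sigma \in \Sigma$ with rays $\rho_{i_1}, \ldots, \rho_{i_l}$, choose $e_{i_1}, \ldots, e_{i_l} \in \Psi(\sigma)$ lifting the basis of $M_{\sigma,\Q}$ dual to $n_{i_1}, \ldots, n_{i_l}$; this is possible because $\Psi(\sigma) \to M_{\sigma,\Q}$ is an isomorphism. The $J^\Psi$ relation at $\sigma$ with $m = e_{i_j}$ then expresses $D_{i_j}^2 \prod_{l \neq j} D_{i_l}$ as a $\Lambda_\Q$-linear combination of $D_{i_1} \cdots D_{i_l}$ and of products $D_{i_1} \cdots D_{i_l} D_{j'}$ for rays $\rho_{j'} \not\subset \sigma$, each of which is either killed by $I$ or supported on a strictly larger cone. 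Iterating this reduction shows that the quotient is spanned over $\Lambda_\Q$ by the squarefree monomials $D_{i_1} \cdots D_{i_l}$ indexed by the cones of $\Sigma$, yielding the desired rank bound; combined with surjectivity of $\phi$, this forces $\phi$ to descend to an isomorphism. The final claim, that this ring structure lifts the product on $A^*_T(X)_\Q$, follows from the lifting clause of Theorem \ref{thm.action} applied to each divisor action, together with the standard presentation of the equivariant Chow ring for simplicial toric varieties given in \cite{Br2} and \cite{Fu2}.
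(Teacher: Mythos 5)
Your proposal is correct and takes essentially the same route as the paper's proof: both realize $Z_*^T(X)$ as a cyclic $\Lambda_\Q[D_1,\dots,D_s]$-module generated by $V_{\{0\}}$ (using proper intersection of the $D_i$ to get $D_{i_1}\cdots D_{i_k}\cdot V_{\{0\}}=rV_\sigma$ with $r\neq 0$), check that $I+J^\Psi$ annihilates $V_{\{0\}}$ directly from (\ref{eq.action}) via the principal divisor $\operatorname{div}(\chi^m)$, and establish the reverse containment by reducing arbitrary monomials to squarefree ones using the $J^\Psi$ relations attached to dual-basis elements of $\Psi(\sigma)$. The differences are purely expository: you make explicit the surjectivity computation and the rank count that the paper leaves implicit.
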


If we modify the definition of $J^\Psi$ above by allowing all $m\in M_\Q$ (eliminating the restriction $m \in \Psi( \sigma)$), we create a larger ideal $J'$.  The equivariant cohomology ring over $\Q$, denoted $A_T^*(X)_\Q$, then has a natural presentation as ${\Lambda_\Q[D_1,\dots, D_s]}/(I + J')$, as explained in \cite{Fu2}.  Note the equivariant cohomology ring can be defined over $\Z$ in the nonsingular case.  However, the ring structure in Theorem \ref{thm.ringstructure} cannot, since our complement maps are in general only well-defined over $\Q$.

\subsection{A Cycle Equivariant Todd class }
\lbl{sub.toddresults}

We now show how Theorem \ref{thm.ringstructure} can be used to obtain a cycle expression for the equivariant Todd class of an arbitrary toric variety, given the choice of a complement map.  Recall that for a nonsingular toric variety $X$, the equivariant Todd class, denoted $\Td^T(X)$, which naturally lives in the equivariant cohomology ring $A_T^*(X)_{\BQ}$, can be expressed as

\begin{equation}
\lbl{eq.toddprod}
\Td^T(X) := \prod_{i=1}^s  \frac{D_i}{1-e^{-D_i}} .
\end{equation}
(See \cite{BV3} for a proof.)  Using Theorem 2, we can multiply out the above product, and the result is a cycle expression for the equivariant Todd class, living in the completion $\hat{Z}^T_*(X)_{\BQ}$ of  $Z^T_*(X)$.  This gives us an expression for $\Td^T(X)$ in terms of the cycles $V_{\sigma}$:
$$
\Td^T(X) = \sum_{\sigma\in\Sigma} r^{\Psi}(\sigma) V_{\sigma},
$$
where the coefficients $r^{\Psi}(\sigma)$ live in  $\hat{\Lambda}_{\BQ} := \widehat{\Q[M]}$, the completion of $\Lambda_\Q$.  As we show, these coefficients depend only on the complement map $\Psi$ and on the nonsingular cone $\sigma$, and are independent of the rest of $\Sigma$.  Every cone $\sigma$ has a subdivision into nonsingular cones, allowing $\sigma$ to be written as the union of a finite collection of cones $\{\sigma_1,\dots\sigma_t\}$ all of the same dimension, which intersect only along boundaries.  In this case, we define

\begin{equation}
\lbl{eq.additivity}
r^{\Psi}(\sigma) = \sum_i r^{\Psi}(\sigma_i).
\end{equation}

It turns out, as asserted by our next theorem, that this sum is independent of the chosen subdivision, and thus we obtain a local formula for the equivariant Todd class of a toric variety.

\begin{theorem} 
\lbl{thm.rsigma}
Let $\Psi$ be a complement map on a lattice $N$.  The above construction produces a well-defined map
$$
r^{\Psi} : \{ \Psi\textrm{-generic\ cones\ in\ }N \} \longrightarrow \hat{\Lambda}_{\BQ}.
$$
This function satisfies the following properties
\begin{itemize}

\item (Local expression for $\Td^T(X)$)  For every $\Psi$-generic fan $\Sigma$, we have
\begin{equation}
\lbl{eq.cycletodd}
\Td^T(X) = \sum_{\sigma\in\Sigma} r^{\Psi}(\sigma) V_{\sigma},
\end{equation}

\item (Additivity)  If $\sigma=\cup_{i=1}^t \sigma_i $ is a subdivision of a $\Psi$-generic cone $\sigma$ in $N$ into cones $\sigma_i$, all of the same dimension, which intersect only along boundaries, then 
$$
r^{\Psi}(\sigma) = \sum_i r^{\Psi}(\sigma_i).
$$

\item (Analytic properties)
For any $\Psi$-generic cone $\sigma$, the power series $r^{\Psi}(\sigma)\in \hat{\Lambda}_{\BQ}$ represents a meromorphic function on $N\otimes\BC$, regular at the origin.

\item (Agreement with nonequivariant cycle Todd class)  The constant term $r^{\Psi}(\sigma)(0)$ agrees with cycle Todd class coefficients $\mu(\sigma)$ constructed in \cite{PT}.

\end{itemize}

\end{theorem}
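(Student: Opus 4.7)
The plan is to define $r^\Psi(\sigma)$ first on nonsingular $\Psi$-generic cones using Theorem \ref{thm.ringstructure}, and then extend to all $\Psi$-generic cones via the additivity recipe \eqref{eq.additivity}. For nonsingular $\sigma$, one picks any nonsingular $\Psi$-generic fan $\Sigma$ containing $\sigma$ as a cone (for instance, the fan consisting of $\sigma$ together with its faces), expands the product $\prod_{i=1}^s D_i/(1-e^{-D_i})$ in the completion $\hat Z^T_*(X_\Sigma)_\Q$ of the ring described in Theorem \ref{thm.ringstructure}, and reads off the coefficient $r^\Psi(\sigma)$ of $V_\sigma$. The four bulleted properties are then obtained in sequence: locality and independence of the ambient fan, well-definedness under subdivision, the local expression for $\Td^T(X)$ for an arbitrary $\Psi$-generic fan, and finally the analytic and comparison statements.

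I would first establish locality: the coefficient of $V_\sigma$ depends only on $\sigma$ and $\Psi$, not on the surrounding fan. The Stanley-Reisner relations in $I$ kill every monomial in the $D_j$ whose rays do not span a cone, so the only surviving contributions to the coefficient of $V_\sigma = D_{\rho_1}\cdots D_{\rho_k}$ come from products of rays lying in cones $\tau \succ \sigma$. Iteratively applying the relation $V_\sigma \cdot \bigl(\sum_j \ip{m}{n_j} D_j - m\bigr) = 0$ for $m \in \Psi(\sigma)$ then permits rewriting all such products in terms of data intrinsic to $\sigma$, namely its primitive generators and the splitting $i^\Psi : M_\sigma \to M$. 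Carrying out this reduction yields a closed-form local formula for $r^\Psi(\sigma)$ purely in terms of $\sigma$ and $\Psi$.

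The main obstacle is proving well-definedness under subdivision, i.e.\ the identity $r^\Psi(\sigma) = \sum_i r^\Psi(\sigma_i)$ for any nonsingular subdivision of a nonsingular $\Psi$-generic cone $\sigma$; by passing to a common nonsingular refinement, arbitrary pairs of nonsingular subdivisions of an arbitrary $\Psi$-generic cone reduce to this case. I would approach this via Grothendieck-Riemann-Roch applied to the toric birational morphism $\pi : X_{\Sigma'} \to X_\Sigma$: since toric varieties have rational singularities, $R\pi_* \O_{X'} = \O_X$, and GRR gives $\pi_* \Td^T(X_{\Sigma'}) = \Td^T(X_\Sigma)$ in $A_T^*(X_\Sigma)_\Q$. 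Combining this with the pushforward computation $\pi_* V_{\sigma'} = V_\sigma$ when $\dim \sigma' = \dim \sigma$ (the map $V(\sigma') \to V(\sigma)$ is birational in this case) and $\pi_* V_{\sigma'} = 0$ otherwise, and then invoking locality to compare cycle-level coefficients on the two fans, produces the desired additivity. The delicate point is that passing from cohomology to cycles requires the specific lift provided by our construction to be compatible with $\pi_*$; I expect this to follow from a careful term-by-term comparison of the explicit local formulas on $X_\Sigma$ and $X_{\Sigma'}$ obtained from the locality step, matched across the cones of the subdivision.

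With well-definedness established, the local expression $\Td^T(X) = \sum_\sigma r^\Psi(\sigma) V_\sigma$ holds by construction for nonsingular $\Sigma$ and extends to arbitrary $\Psi$-generic $\Sigma$ by pushforward from a nonsingular toric resolution. The analytic properties follow by inspecting the explicit expansion of each factor $D_i/(1-e^{-D_i})$ in the locality formula, which is a rational function regular at the origin, times a reduction polynomial in the splittings $i^\Psi$ that is algebraic in finitely many equivariant parameters. Finally, agreement with the Todd coefficients $\mu(\sigma)$ of \cite{PT} holds because setting the equivariant parameters to zero (i.e., taking the constant term in $\hat\Lambda_\Q$) reduces the ring structure of Theorem \ref{thm.ringstructure} to the nonequivariant cycle product of \cite{Th} and the equivariant Todd product \eqref{eq.toddprod} to the nonequivariant Todd product used in \cite{PT}.
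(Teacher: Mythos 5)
Your locality step and your treatment of the last bullet are fine, but the additivity argument has a genuine gap, and it sits exactly where you flag ``the delicate point.'' Grothendieck--Riemann--Roch applied to a toric refinement $\pi : X_{\Sigma'} \to X_\Sigma$ gives $\pi_*\Td^T(X_{\Sigma'}) = \Td^T(X_\Sigma)$ only in $A_*^T(X_\Sigma)_\Q$, i.e.\ modulo the relations $J'$. The classes $[V_\sigma]$ are not linearly independent over $\hat\Lambda_\Q$ in $A_*^T(X_\Sigma)_\Q$ (already for $\BP^1$ one has $[V_{\rho_1}]-[V_{\rho_2}] = m\,[V_0]$), so from the identity $\sum_\sigma\bigl(\sum_{\sigma'\subset\sigma,\ \dim\sigma'=\dim\sigma} r^\Psi(\sigma')\bigr)[V_\sigma] = \sum_\sigma r^\Psi(\sigma)[V_\sigma]$ in equivariant Chow you cannot conclude equality of coefficients. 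The ``careful term-by-term comparison'' you defer is therefore not a routine verification: it is the entire content of the additivity statement, and GRR contributes nothing toward closing the gap between a Chow-class identity and a cycle identity.

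The paper closes this gap by a different mechanism. Using an explicit $\Lambda_\Q$-linear map $\Phi$ on the local ring of a nonsingular full-dimensional cone (sending $\prod_{i\in T}D_i$ to the exponential integral $\tilde I(K_T)$ and the Todd product to the exponential sum $\tilde S(K)$), it shows that the coefficients satisfy the interpolator equation $\tilde S(K) = \sum_{\tau\prec\sigma} r^\Psi(\tau)\,\tilde I(K_\tau)$. An induction on dimension, with Lemma \ref{lemma.lowerdim} handling cones that are not full-dimensional, then identifies $r^\Psi(\sigma)$ with the SI-interpolator $\tilde\mu^\Psi(\check\sigma)$ of \cite{GP}; additivity under subdivision, meromorphy, and regularity at the origin are all inherited from the corresponding properties of $\mu^\Psi$ established there, and the constant-term comparison with \cite{PT} then follows as in your last paragraph. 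If you want a self-contained proof along your lines, you would need to replace the GRR step with an argument that works at the level of cycles --- for instance reducing to a single star subdivision and verifying the coefficient identity directly from the local formulas --- which is essentially the nontrivial computation the paper outsources to \cite{GP}.
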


\subsection{SI-Interpolators and Euler-Maclaurin formulas for polyhedra }
\lbl{sub.interpolator}

Finally, we indicate some connections with sum-integral interpolation and Euler-Maclaurin formulas for polyhedra.  A cycle expression for the Todd class allows one, via Riemann-Roch, to express exponential sums (S) over a polyhedron $P$ in terms of exponential integrals (I) over the faces $F$ of $P$.  

To make this connection explicit, suppose that $P$ is a rational polyhedron in $M$.  
One can associate to $P$ two meromorphic functions, the {\em exponential sum}
 $S(P) \in \calM(N)$ and the {\em exponential integral} $I(P) \in \calM(N)$
where $\calM(N)$ is the algebra of 
meromorphic functions on $N\otimes \BC$.  These functions are given by the equations
\begin{equation}
\lbl{eq.SandI}
S(P)(\xi)=\sum_{x \in P \cap M}  e^{\ip{\xi}{x}}, 
\qquad
I(P)(\xi)=\int_{P}  e^{\ip{\xi}{x}} dx
\end{equation}
for $\xi\in N\otimes \BC$ provided $|e^{\ip{\xi}{x}}|$ is summable (resp. 
integrable) over $P$.  
The fact that the equations define meromorphic functions, 
as well as the precise characterization and properties of the functions 
$S$ and $I$, is essentially  the content of Lawrence's theorem \cite{La}. 

The fact that the functions $r^{\Psi}$ defined above satisfy equation (\ref{eq.cycletodd}) implies that they interpolate between exponential sums and integrals, as stated in the following corollary.  If $F$ is a face of a lattice polyhedron $P$ in $M$, we use $\sigma_{P,F}$ to denote the cone in $N$ dual to the tangent cone to $P$ along $F$;  the cone $\sigma_{P,F}$ is the cone in the inner normal fan $\Sigma$ corresponding to $F$. (See Section \ref{sub.todd} for detailed definitions.)
\begin{corollary}
\lbl{thm.rinterpolates}
Let $\Psi$ be a complement map on $N$.  Then for any $\Psi$-generic integral polyhedron $P$ in $M$ we have
$$
S(P)(\xi)= \sum_F r^{\Psi}(\sigma_{P,F})(-\xi)\cdot I(F)(\xi),
$$
where the sum is taken over all faces $F$ of $P$.
\end{corollary}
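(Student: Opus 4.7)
The plan is to deduce the corollary directly from the cycle-level equivariant Todd class expression of Theorem~\ref{thm.rsigma} via the equivariant Hirzebruch-Riemann-Roch theorem applied to the toric variety determined by $P$.

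Let $\Sigma$ be the inner normal fan of $P$ and let $X = X_\Sigma$ be its associated toric variety.  Since $P$ is $\Psi$-generic, Theorem~\ref{thm.rsigma} applies and gives the cycle-level expression $\Td^T(X) = \sum_{\sigma \in \Sigma} r^\Psi(\sigma) V_\sigma$ in the completion of $Z^T_*(X)$.  The polytope $P$ also determines a $T$-equivariant line bundle $L_P$ on $X$, with equivariant Chern character $\mathrm{ch}^T(L_P)$ in the appropriate completion of $A^*_T(X)_\BQ$.

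The proof then rests on two standard identifications.  First, the equivariant holomorphic Euler characteristic $\chi^T(X, L_P)$, viewed as a meromorphic function on $N \otimes \BC$ via the character of the $T$-action, equals the exponential sum $S(P)(\xi)$: in the bounded/ample case this is the classical description of the global sections of $L_P$ as characters indexed by $P \cap M$, and in general the equality holds as an identity of meromorphic functions by analytic continuation via Lawrence's theorem~\cite{La}.  Second, for each cone $\sigma = \sigma_{P,F}$, the equivariant pushforward
$$\int_{V_\sigma}^T \mathrm{ch}^T(L_P) \;=\; I(F)(\xi),$$
because $V_\sigma$ is itself a toric subvariety whose moment polytope is (a translate of) the face $F$, so the equivariant integral recovers the exponential integral over $F$.

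Now apply the equivariant Hirzebruch-Riemann-Roch theorem,
$$\chi^T(X, L_P) \;=\; \int_X^T \Td^T(X) \cdot \mathrm{ch}^T(L_P),$$
substitute $\Td^T(X) = \sum_\sigma r^\Psi(\sigma) V_\sigma$, and pull the scalars $r^\Psi(\sigma) \in \hat{\Lambda}_\BQ$ out of the equivariant integral.  Under the convention identifying $\hat{\Lambda}_\BQ = \widehat{\BQ[M]}$ with power series on $N \otimes \BC$ used in defining $S$ and $I$, the evaluation at $\xi$ of a factor $r^\Psi(\sigma) \in \hat{\Lambda}_\BQ$ corresponds to the substitution $\xi \mapsto -\xi$; combined with the two identifications above, this yields exactly $\sum_F r^\Psi(\sigma_{P,F})(-\xi) \cdot I(F)(\xi)$, as claimed.

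The main obstacles will be the careful bookkeeping of sign conventions in the pairing between $M$ and $N$, invoking an equivariant Riemann-Roch theorem for a possibly singular toric $X$ (for which the Brion-Vergne formulation for toric varieties suffices), and reducing the case of an unbounded polyhedron to that of a bounded lattice polytope, either by analytic continuation via Lawrence's theorem or by a Brianchon-Gram-type decomposition into tangent cones.
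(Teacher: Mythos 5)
Your route --- cite the cycle Todd class of Theorem \ref{thm.rsigma}, feed it into equivariant Hirzebruch--Riemann--Roch on $X_\Sigma$, and identify $\chi^T(X,L_P)$ with $S(P)$ and the equivariant pushforwards over the $V_\sigma$ with the $I(F)$ --- is the derivation the introduction gestures at, but it is not the proof the paper actually gives, and as written it has genuine gaps exactly where you list ``main obstacles.'' The corollary is claimed for arbitrary $\Psi$-generic integral \emph{polyhedra}: $P$ may be unbounded, so $\Sigma$ is incomplete, $X$ is noncompact, and $\chi^T(X,L_P)$ has no literal meaning as an alternating sum of finite-dimensional characters; and the normal fan may be singular and non-simplicial, so the equivariant Riemann--Roch of \cite{BV3}, stated for complete simplicial toric varieties, does not apply as cited. ``Analytic continuation via Lawrence's theorem'' does not immediately repair the unbounded case either: the coefficients $r^{\Psi}(\sigma_{P,F})$ vary with $P$ through its tangent cones, so the right-hand side is not a fixed valuation evaluated on a Brianchon--Gram decomposition, and one would have to prove a compatibility of the $r^{\Psi}$'s across such a decomposition. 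These are not bookkeeping issues; in the generality claimed they are the substance of the statement.

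The paper sidesteps all of this. It proves the interpolation identity directly and purely algebraically for a single full-dimensional \emph{nonsingular} cone $K=\check{\sigma}$: Lemma 2.2 exhibits an explicit $\Lambda_\Q$-linear map $\Phi:\hat{R}_\sigma\to\hat{\Lambda}_\Q$, $\gamma(D_1,\dots,D_n)\mapsto P^{-1}\gamma(m_1,\dots,m_n)$ with $P=\prod_i m_i$, sending $\prod_{i\in T}D_i$ to $\itilde(K_T)$ and $\prod_i D_i/(1-e^{-D_i})$ to $\stilde(K)$; applying $\Phi$ to the relation $\prod_i D_i/(1-e^{-D_i})=\sum_\tau r^\Psi(\tau)D_\tau$ yields the corollary for that one cone with no completeness, no line bundles, and no Riemann--Roch theorem for singular varieties. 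It then establishes $r^\Psi(\sigma)=\mutilde^\Psi(\check{\sigma})$ by induction on dimension (Theorem \ref{thm.gpagree}), and the corollary for a general $\Psi$-generic integral polyhedron is \emph{inherited} from the fact, proved in \cite{GP}, that $\mu^\Psi$ is an SI-interpolator. To salvage your global approach you would need either to restrict to full-dimensional lattice polytopes with simplicial normal fans (supplying the vanishing and ampleness arguments that identify $\chi^T(X,L_P)$ with $S(P)$), or to carry out the reduction of the unbounded and singular cases in detail --- which is precisely the work that the paper's detour through \cite{GP} replaces.
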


Finally, we note that these results connect with existing results in \cite{PT,BV1,GP} and strengthen the connections between those results.   In the theorem below, we show that  our interpolators $r^{\Psi}$ agree with those constructed in \cite{GP} for general $\Psi$, and for $\Psi$ arising from an inner product, our $r^{\Psi}$ coincides with the $\mu$ constructed in \cite{BV1, BV2}.  Note, however, that both \cite{BV1} and \cite{GP} allow rational polyhedra, while in the current case we are restricted to integral polyhedra.

\begin{theorem}
\lbl{thm.gpagree}
Let $\Psi$ be a complement map on a lattice $N$, and let $\mu^{\Psi}$ be the SI-interpolator defined in \cite{GP}.  Let $\sigma$ be a cone in $N$ and  $\check{\sigma}$ be its dual in $M$.  We then have 
$$
r^{\Psi}(\sigma)(\xi)=\mu^{\Psi}(\check{\sigma})(-\xi).
$$
\end{theorem}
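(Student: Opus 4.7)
The plan is to prove the identity by showing that both sides are characterized as the same SI-interpolator associated to $\Psi$. The right-hand side $\mu^{\Psi}(\check\sigma)(-\xi)$ is, by construction in \cite{GP}, the unique interpolator determined by $\Psi$ together with a normalization on simple cones. So it suffices to verify that $r^{\Psi}(\sigma)(\xi)$, viewed through the duality $\sigma \mapsto \check\sigma$ and the sign change $\xi \mapsto -\xi$, satisfies the same defining axioms.

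The first step is to assemble the properties of $r^{\Psi}$ already in hand. Corollary \ref{thm.rinterpolates} gives the Euler--Maclaurin identity
$$S(P)(\xi) = \sum_F r^{\Psi}(\sigma_{P,F})(-\xi)\cdot I(F)(\xi),$$
which is precisely the defining SI-interpolation relation used in \cite{GP}. Additivity under subdivisions and regularity at the origin appear as the second and third bullets of Theorem \ref{thm.rsigma}; these are exactly the further axioms that \cite{GP} uses to cut the space of candidate interpolators down to a unique one. The remaining piece is the $\Psi$-compatibility condition that pins $\mu^{\Psi}(\check\sigma)$ down in \cite{GP}, characterized there by a prescribed behavior of the interpolator along $\Psi$-complement directions. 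This condition should be traced back to the ideal $J^{\Psi}$ of Theorem \ref{thm.ringstructure}, whose defining relations involve exactly the sections $m \in \Psi(\sigma)$; the fact that $r^{\Psi}(\sigma)$ arises by expanding \eqref{eq.toddprod} modulo $J^{\Psi}$ should translate, under the $\sigma \leftrightarrow \check\sigma$ duality, into the $\Psi$-support condition required in \cite{GP}.

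The main obstacle is this last step -- matching the algebraic-geometric $\Psi$-covariance built into $J^{\Psi}$ on the cycle side with the purely combinatorial $\Psi$-support condition for $\mu^{\Psi}$ in \cite{GP}. The cleanest reduction is to check the nonsingular case first, cone by cone. For a nonsingular $\sigma$, equation \eqref{eq.toddprod} presents $\Td^T(X)$ as an explicit product whose expansion, read off via Theorem \ref{thm.ringstructure}, yields a closed-form generating function for $r^{\Psi}(\sigma)$ on each maximal cone; this should match, factor by factor after the change of variables, the explicit formula for $\mu^{\Psi}$ on a unimodular cone from \cite{GP}. The general case then follows by combining the additivity of $r^{\Psi}$ (Theorem \ref{thm.rsigma}) with the corresponding additivity of $\mu^{\Psi}$ in \cite{GP}, applied to any common nonsingular subdivision of $\sigma$. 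Evaluating at $\xi = 0$ then recovers the last bullet of Theorem \ref{thm.rsigma} and simultaneously proves the conjecture of \cite{GP} mentioned in the introduction.
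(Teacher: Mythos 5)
There is a genuine gap, and it is primarily one of circularity. Your first two paragraphs propose to verify that $r^{\Psi}$ satisfies the axioms characterizing $\mu^{\Psi}$, citing Corollary \ref{thm.rinterpolates} for the interpolation identity and the second and third bullets of Theorem \ref{thm.rsigma} for additivity and regularity. But in the paper's logical order these statements are \emph{consequences} of Theorem \ref{thm.gpagree}, not inputs to it: at the point where the theorem must be proved, $r^{\Psi}(\sigma)$ for a singular cone is only defined by choosing a nonsingular subdivision and summing, and the independence of that choice (additivity) is exactly what is established \emph{by} identifying $r^{\Psi}$ with $\mutilde^{\Psi}$ and borrowing the additivity of $\mu^{\Psi}$. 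Likewise the interpolation identity for a general polyhedron requires the cycle Todd class formula for arbitrary fans, which presupposes well-definedness on singular cones. So the uniqueness-of-interpolators route, as you have set it up, assumes what is to be proved. What is actually available unconditionally is much less: the locality of $r^{\Psi}$ on nonsingular cones (Lemma 2.3 of the paper) and, via the equivariant Riemann--Roch map $\Phi$ of Lemma 2.4, the single identity $\stilde(K)=\sum_{\tau\faceof\sigma} r^{\Psi}(\tau)\,\itilde(K_\tau)$ for one full-dimensional nonsingular cone $K=\check\sigma$ at a time. The paper's proof runs entirely off this: since $\mu^{\Psi}$ satisfies the same identity and $\itilde(K_\sigma)=1$ for the top face, the top coefficient is determined by the coefficients of the proper faces, and induction on dimension gives $r^{\Psi}(\sigma)=\mutilde^{\Psi}(\check\sigma)$ for nonsingular cones; only then does additivity of $\mu^{\Psi}$ extend everything to singular cones.

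Your final paragraph is closer in spirit to this, but the proposed mechanism for the nonsingular case --- matching the expansion of \eqref{eq.toddprod} ``factor by factor'' against ``the explicit formula for $\mu^{\Psi}$ on a unimodular cone from \cite{GP}'' --- is not available: $\mu^{\Psi}$ is defined in \cite{GP} by an inductive combinatorial recipe, not a closed product formula, and even in dimension two the resulting expressions (Proposition \ref{prop.2d}) are not products. The step you flag as ``the main obstacle,'' namely relating the $\Psi$-dependence of $J^{\Psi}$ to the $\Psi$-support condition in \cite{GP}, is also left unresolved; the paper handles it concretely via Lemma \ref{lemma.lowerdim}, showing $r^{\Psi}(\sigma)=i^{\Psi}(r^{\Psibar}(\sigma_0))$ for lower-dimensional nonsingular cones by pushing the ring presentation of Theorem \ref{thm.ringstructure} along $i^{\Psi}:\Lambda_{\sigma,\Q}\to\Lambda_\Q$, and then matches this against the $\Psi$-hereditary property of $\mu^{\Psi}$. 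To repair your argument you would need to replace the appeal to Corollary \ref{thm.rinterpolates} and Theorem \ref{thm.rsigma} with the Riemann--Roch computation for a single nonsingular cone, and replace the formula-matching with the face-by-face induction plus the hereditary lemma.
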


Note that for $\Psi$ derived from an inner product, the above theorem relates our  $r^{\Psi}$ to the $\mu$ defined in \cite{BV2} restricted to lattice cones.

As a corollary of the above theorem, using the last part of Theorem \ref{thm.rsigma}, we see that the constant term of the \cite{GP} construction coincides with the rational numbers $\mu$ from the \cite{PT} construction.  In particular, the constant term of the \cite{BV1} construction is the $\mu$ from \cite{PT} in the case of complement maps arising from inner products. This gives a positive answer to Conjecture 1 in \cite{GP}.

\begin{corollary}
Suppose $\Psi$ is a complement map on a lattice $N$. Let $\mu_0^{\Psi}$ denote the cycle Todd class coefficients constructed in \cite[Corollary 1]{PT}, and let $\mu^{\Psi}$ be the SI-interpolator defined in \cite{GP} .  Let $\sigma$ be a cone in $N$ and  $\check{\sigma}$ be its dual in $M$.  We then have
$$
\mu_0^{\Psi}(\sigma) =  \mu^{\Psi}(\check{\sigma})(0).
$$
\end{corollary}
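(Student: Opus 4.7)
The plan is to derive this corollary as an immediate consequence of Theorem \ref{thm.gpagree} combined with the last bullet of Theorem \ref{thm.rsigma}; no new machinery is needed. The key observation is that both sides of the desired equality are values at $\xi = 0$ of meromorphic functions already known to agree via the chain of identifications $\mu_0^{\Psi}(\sigma) \leftrightarrow r^{\Psi}(\sigma) \leftrightarrow \mu^{\Psi}(\check{\sigma})$.

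Concretely, I would proceed in two steps. First, invoke Theorem \ref{thm.gpagree} to obtain the identity of meromorphic functions
\[
r^{\Psi}(\sigma)(\xi) = \mu^{\Psi}(\check{\sigma})(-\xi).
\]
By the analytic properties bullet of Theorem \ref{thm.rsigma}, $r^{\Psi}(\sigma)$ is regular at the origin, and substituting $\xi = 0$ on both sides yields
\[
r^{\Psi}(\sigma)(0) = \mu^{\Psi}(\check{\sigma})(0).
\]
Second, apply the agreement-with-nonequivariant-cycle-Todd-class bullet of Theorem \ref{thm.rsigma}, which identifies the constant term $r^{\Psi}(\sigma)(0)$ with the cycle Todd class coefficient $\mu_0^{\Psi}(\sigma)$ from \cite{PT}. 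Combining these two equalities gives the desired
\[
\mu_0^{\Psi}(\sigma) = r^{\Psi}(\sigma)(0) = \mu^{\Psi}(\check{\sigma})(0).
\]

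Since this is a direct chaining of two results already stated as bullets above, there is no real obstacle at the level of the corollary itself; all the substantive work has been absorbed into Theorems \ref{thm.rsigma} and \ref{thm.gpagree}. The only small point to be careful about is ensuring the evaluation at $0$ is legitimate, which is exactly guaranteed by the analytic regularity bullet of Theorem \ref{thm.rsigma} (and which then forces $\mu^{\Psi}(\check{\sigma})$ to also have a well-defined constant term). This is the answer to Conjecture 1 of \cite{GP} promised in the introduction.
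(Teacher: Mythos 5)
Your proposal is correct and matches the paper's own argument, which likewise derives the corollary directly from Theorem \ref{thm.gpagree} together with the final assertion of Theorem \ref{thm.rsigma}. Your added remark that the analytic regularity bullet justifies evaluation at $\xi=0$ is a reasonable elaboration of a point the paper leaves implicit.
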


\vskip0.25in
\noindent{\bf Acknowledgement.} We would like to thank William Fulton for useful conversations and for the inspiration he has provided over the years.

\vskip0.25in
\section{Details and Proofs}
\subsection{Constructing the Group Action}
\lbl{sec.action}

The following section is included in order to provide motivation for the unusual formula for the group action given in Theorem $\ref{thm.action}$.  In the process, we will prove why the action is a lift of the action of the Picard group on the equivariant Chow groups.  We also explain why the choice of a complement map appears to be necessary.

We require an action of $\Div_T(X)$ on $Z_*^T(X)$ that respects $\Lambda_\Q$-multiplication.  Since, as $\Lambda_\Q$-modules, $\Div_T(X)$ is generated by $\Div(X)$, and $Z_*^T(X)$ is generated by $\{V_\sigma: \sigma \in \Sigma\}$, it is enough to describe how $\Div(X)$ acts on each $V_\sigma$.  We first recall the natural map from divisors to cycles.  Let $D \in \Div(X)$ be a divisor with local equations $d_\sigma$.  Let $\rho_1, \dots, \rho_s $ be the one-dimensional cones in $\Sigma $.  For simplicity, we notate $V_i := V_{\rho_i}$.  Associated to $D$, we define the cycle

$$[D] := \sum_{i=1}^s \ip{d_{\rho_i}}{n_i} V_i,$$
where $n_i$ is the primitive generator of $\rho_i$ (the first lattice point along $\rho_i$).  The map $D \mapsto [D]$ clearly induces a $\Lambda$-module homomorphism from $\Div_T(X)$ to $Z_*^T(X)$. 

Let $\sigma \in \Sigma$.  We wish to consider the action of $D$ on $V_\sigma$ as an intersection product of $[D]$ and $V_\sigma$.  Formally, we consider

$$[D] \cdot V_\sigma = \sum_{i=1}^s \ip{d_{\rho_i}}{n_i} V_i \cdot V_\sigma$$

Now we break up the summation and use the mapping property of local equations:

$$[D] \cdot V_\sigma = \sum_{\rho_i \not \prec \sigma} \ip{d_{\rho_i}}{n_i} V_i \cdot V_\sigma + \sum_{\rho_i \prec \sigma} \ip{d_{\sigma}}{n_i} V_i \cdot V_\sigma$$

We have separated the terms so that the left-hand summation contains only terms that intersect properly, so that their intersection product is natural and well-defined.  However, the right-hand summation contains terms that do not intersect properly.  We recall that the intersection product is always well-defined in the equivariant Chow ring $A_*^T(X)_\Q$, so we consider $[D]$ as an element of this ring.  As described in \cite{Fu2} (over $\Z$), $A_*^T(X)_\Q \cong \Lambda_\Q[V_1,\dots,V_s] / J'$, where

$$J' = \biggl\langle \sum_{\tau: \tau \rightarrow \sigma} \ip{m}{n_{\tau,\sigma}}V_i - mV_\sigma : \sigma \in \Sigma, m \in M_\Q \biggr\rangle $$

The solution is to shift $[D]$ by an element of $J'$ in order to eliminate the terms that do not intersect properly with $\sigma$, thus preserving the intersection product in the Chow ring.  We would like to pick an element $E \in J'$:

$$E = \sum_{i=1}^s \ip{m}{n_i} V_i - mV_{\{0\}}$$

where $m \mapsto d_\sigma$ under the quotient map $M_\Q \rightarrow M_{\sigma, \Q}$.  It is clear that $[D] - E$ would then only contain terms that intersect properly with $D_\sigma$.  However, we run into some ambiguity: there is no natural choice of $m$, since there is no natural section $M_{\sigma, \Q} \rightarrow M_\Q$.  This is why we must choose a complement map.  

Let $\Psi$ be a complement map on $V$ with each cone in $\Sigma$ included in the domain.  Recall that we write $d_\sigma^\Psi$ as the image of $d_\sigma$ in $M_\Q$, under $\Psi$.  Define the cycle

$$E_\sigma^\Psi := \sum_{i=1}^s \ip{d_\sigma^\Psi}{n_i} V_i - d_\sigma^\Psi V_{\{0\}}$$

Note that $E_\sigma^\Psi$ is in $J'$ as required.  Thus the product $([D] - [E_\sigma^\Psi])\cdot V_\sigma$ is a product of cycles that intersect properly, and furthermore, each term in the product corresponds either to $\sigma$ or to a cone $\tau$ that contains $\sigma$ as a maximal proper face.  Then we define

$$D \cdot V_\sigma = ([D]-[E_\sigma^\Psi])\cdot V_\sigma$$.

When we work this out explicitly, we recover the formula in Theorem $\ref{thm.action}$: 

\begin{equation}
D \cdot V_\sigma = \sum_{\tau: \tau \rightarrow \sigma} \ip{d_\tau - d_\sigma^\Psi}{n_{\tau,\sigma}} V_\tau + d_\sigma^\Psi V_\sigma.
\end{equation}

\subsection{Welldefinedness and Properties}
\lbl{sec.thm1proof}

We now begin the proof of Theorem 1.
\begin{proof}

The fact that the map respects the properties of a group action follows immediately from the additivity of all maps involved in the definition.  The characterization of the map in the previous section proves that the action on $Z^T_*(X)$ is a lifting of the action on $A_*^T(X)_\Q$.

In order to prove that the action generalizes the cycle-level action defined in \cite{Th}, we must reconcile our definition with his.  Let $D \in \Div^T(X), C \in Z^T_*(X)$.  Let $\odot$ be the action of $\Div(X)$ on $Z_*(X)_\Q$ as defined in \cite{Th}.  Finally, let $\phi$ be the natural homomorphism from equivariant divisors (or cycles) to basic divisors (or cycles) that maps $M_\Q$ to $0$.  Then we wish to prove that

$$\phi(D \cdot C) = \phi(D) \odot \phi(C)$$.

Since $\phi$ is a module homomorphism, it suffices to assume $C = V_\sigma$ and $D \in \Div(X)$.  Let $\{d_\sigma\}$ be the local equations for $\phi(D)$.  For this proof, we briefly introduce notation from \cite{Th}.  As defined in \cite{Th},

$$\phi(D) \odot V_\sigma = \sum_{\tau: \tau \rightarrow \sigma} \ip{\pi_\sigma(m_\tau)}{n_{\tau,\sigma}} V_\tau$$

where $\pi_\sigma: M_\tau \rightarrow M(\sigma)_\tau$ is the projection map derived from $\Psi$.  Using the above definition,

$$\phi(D \cdot V_\sigma) = \sum_{\tau: \tau \rightarrow \sigma} \ip{m_\tau - m_\sigma^\Psi}{n_{\tau,\sigma}} V_\tau$$.

It suffices to show that for $\tau \rightarrow \sigma$,

$$\ip{m_\tau - m_\sigma^\Psi}{n_{\tau,\sigma}} = \ip{\pi_\sigma(m_\tau)}{n_{\tau,\sigma}}$$

Following through the definitions of each term makes the statement clear.  For purely aesthetic reasons, the authors have chosen to define the action using the embedding map $i^\Psi$ rather than the projection map $\pi_\sigma$, although they are certainly equivalent.

Since the basic cycle-level action was proven to be commutative in \cite{Th}, most of the work in proving the commutativity of the equivariant-level action is done.  Let $D, E \in \Div^T(X)$, $C \in Z^T_*(X)$.  We wish to show that $D \cdot (E \cdot C) = E \cdot (D \cdot C)$.  Once again, it suffices to assume $D, E \in \Div(X)$, $C = V_\sigma$.  Expanding upon the definition, we see that

$$D \cdot (E \cdot V_\sigma) = \sum_{\delta: \delta \rightarrow \sigma} \sum_{\tau: \tau \rightarrow \delta} \ip{e_\delta - e_\sigma^\Psi}{n_{\delta, \sigma}}\ip{d_\tau - d_\delta^\Psi}{n_{\tau, \delta}} V_\tau $$

$$+ \sum_{\delta: \delta \rightarrow \sigma} \bigl{(}  \ip{e_\delta - e_\sigma^\Psi}{n_{\delta, \sigma}} d_\delta^\Psi + \ip{d_\delta - d_\sigma^\Psi}{n_{\delta, \sigma}} e_\sigma^\Psi  \bigr{)} V_\delta + d_\sigma^\Psi e_\sigma^\Psi V_\sigma $$

$E \cdot (D \cdot D_\sigma)$ is of course the same expression with $e$ and $d$ switched.  Notice that the top row of the expression, by itself, is the basic cycle-level action.  Since the commutativity of that action was shown in \cite{Th}, it suffices to focus exclusively on the bottom half.  By the symmetry of the last term, it suffices to show that for $\delta \rightarrow \sigma$,

$$\ip{e_\delta - e_\sigma^\Psi}{n_{\sigma,\delta}} d_\delta^\Psi + \ip{d_\delta - d_\sigma^\Psi}{n_{\sigma,\delta}} e_\sigma^\Psi = \ip{d_\delta - d_\sigma^\Psi}{n_{\sigma,\delta}} e_\delta^\Psi + \ip{e_\delta - e_\sigma^\Psi}{n_{\sigma,\delta}} d_\sigma^\Psi$$

or equivalently,

$$\ip{d_\delta - d_\sigma^\Psi}{n_{\sigma,\delta}}(e_\delta^\Psi - e_\sigma^\Psi) = \ip{e_\delta - e_\sigma^\Psi}{n_{\sigma,\delta}}(d_\delta^\Psi - d_\sigma^\Psi)$$

Since $\delta \rightarrow \sigma$, $e_\delta^\Psi - e_\sigma^\Psi$ and $d_\delta^\Psi - d_\sigma^\Psi$ both lie in the one-dimensional space $\Psi(\delta)\cap M(\sigma)$.  The result follows immediately.


\end{proof}

\subsection{A Ring Structure on $Z(X)$ for Simplicial Fans}
\lbl{sec.simplicial}
Our goal in this section is to prove Theorem 2, which gives a ring stucture on the equivariant cycle group $Z_*^T(X)$ in the simplicial case.  This generalizes a similar result for the action on $Z_*(X)_\Q$ found in \cite{Th}.  We note that the (nonequivariant) cycle ring $Z_*(X)$ has a similar presentation as a quotient of a polynomial ring; however, this presentation does not appear in full generality in \cite{Th} or \cite{PT}.

 For this section, suppose that $\Sigma$ is simplicial,.  This implies that the group of divisors $\Div(X)$ is isomorphic to $Z_{n-1}(X)$, the group of cycles of codimension 1. Then clearly $\Div_T(X)$ and $Z_{n-1}^T(X)$ are also isomorphic as $\Lambda_\Q$-modules.  Under this isomorphism, the action of $\Div_T(X)$ on $Z_*^T(X)$ translates to a binary operation on $Z_*^T(X)$.  

\begin{proof}

 The first part of the proof is identical to the argument in \cite{Th}.  Since $\Sigma$ is simplicial, the cycles $\{D_i\}$ correspond to $\Q$-Cartier divisors.  Then, by the commutativity of the action, $Z_*^T(X)$ is a module over $\Lambda_\BQ[D_1,\dots, D_s]$.  In fact, $Z_*^T(X)$ is a cyclic module generated by $V_{\{0\}}$.  Indeed, let $\sigma \in \Sigma$, $\sigma = \rho_{i_1} + \rho_{i_2} + \dots + \rho_{i_k}$.  Then

$$D_{i_1} \cdot D_{i_2} \cdot \dots \cdot D_{i_k} \cdot V_{\{0\}} = rV_\sigma$$
where $r$ is a non-zero rational number, since the divisors $D_i$ intersect properly. (Note that if $X$ is non-singular, $r$ is always $1$.)    

Since $Z_*^T(X)$ is a cyclic module over $\Lambda_\BQ[D_1,\dots,D_s]$, $Z_*^T(X) \cong \Lambda_\BQ[D_1,\dots, D_s] / K$ for some ideal $K$.  It remains to show that $K = I + J^\Psi$.  We first prove $I \subset K$.  Let $D_{i_1} D_{i_2} \dots D_{i_k} \in I$.  Since the set $\{D_i\}$ intersects properly, this product must be a linear combination of $\{D_\tau: \rho_{i_1} + \rho_{i_2} + \dots + \rho_{i_k} \subset \tau\}$.  Since there are no such $\tau \in \Sigma$, the product is $0$.

Next we show $J^\Psi \subset K$.  Given any $m \in M_\sigma$, we consider the principal divisor $D$ with local equations $m_\tau = m$ for all $\tau \in \Sigma$.  Then

$$D_i D_j \dots D_k(\sum_{j=1}^s \ip{m^\Psi}{n_j} D_j - m^\Psi) = rV_\sigma \cdot ([D] - m^\Psi) = 0$$
by (\ref{eq.action}).

In order to prove $K \subset I + J^\Psi$, it must be shown that any polynomial $F \in \Lambda_\Q[D_1,\dots,D_s] / (I + J^\Psi)$ can be written as a linear combination of $\{V_\sigma : \sigma \in \Sigma \}$.  By the definition of $I$, this is equivalent to expressing $F$ as a square-free polynomial in $\Lambda[D_1,\dots,D_s]$.  Since square-free polynomials are closed under addition and multiplication by $\Lambda_\BQ$, we may assume without loss of generality that $F$ is a monomial term with coefficient $1$ such that $F \notin I$.
 
Renumbering the cones of $\Sigma$, we can assume that $F = D_1^{a_1} D_2^{a_2} \dots D_k^{a_k}$, where $\rho_1 + \rho_2 + \dots + \rho_k = \sigma \in \Sigma$, and $a_1 > 1$.  Our goal is to shift by an element of $J^\Psi$ so as to decrease the exponent $a_1$ without increasing any of the other non-zero exponents.  Then, by induction, we are done.  Since $\sigma$ is a simplicial cone, we can choose $m \in M_{\sigma, \Q}$ such that $\ip{m}{n_1} = 1$, and $\ip{m}{n_i} = 0$ for $2 \leq i \leq k$.  Then set

$$E = D_1^{a_1 - 1} D_2^{a_2} \dots D_k^{a_k} (\sum_{j=1}^s \ip{m^\Psi}{n_j} D_j - m^\Psi).$$

Clearly $E \in J^\Psi$, and $F - E$ is a sum of terms that all satisfy our requirement.  Thus $K = I + J^\Psi$, and the theorem is proven.
 
\end{proof}

It is easy to show that under the surjective homomorphism $Z_*^T(X) \rightarrow Z_*(X)_\Q$, $Z_*(X)_\Q \cong \Q[D_1,\dots,D_s] / I^\circ + (J^\psi)^\circ$, where $I^\circ$ and $(J^\psi)^\circ$ are the images of $I$ and $J^\Psi$, respectively.  This gives another characterization of the ring structure on $Z_*(X)_\Q$ of \cite{Th}.  In the case that $\Psi$ is induced from an inner product or a complete flag, there are more concrete descriptions of the ideal $J^\Psi$.   See \cite{Th} and [PT] for details in the basic case.

\subsection{Todd Class Expressions and SI-Interpolators}
\lbl{sub.todd}

In this section we prove Theorems 3, Theorem 4, and Corollary 1.4.  To this end, it will be useful to recall a few definitions and theorems regarding exponential sums and integrals over polyhedra.


Let $P$ be a polyhedron in $V := M_\Q$, with outer normal fan $\Sigma$.  Let $F$ be a face of $P$.  Let $\aff(F)$ be the affine span of $F$, or the smallest affine subspace of $V$ that contains $F$.  Let $\lin(F)$ be the linear subspace parallel to $\aff(F)$.  Since $P$ is rational, there is a natural lattice on $\lin(F)$.  By translation, there is a lattice measure $dm_F$ on $\aff(F)$.

Let $x$ be an interior point in $F$.  The tangent cone $\Tan(P,F) := \{v \in V | x + \epsilon v \in P \text{\ for\ some\ }x \in F^{\circ}, \epsilon > 0 \}$ is the cone of directions that one can go from any point $x$ in the interior of $F$ and stay in $P$.  The supporting cone $\Supp(P,F) := \Tan(P,F) + x$.  Both cones are independent of $x$.

The following theorem is a version of Lawrence's Theorem \cite{La}.

\begin{lawrence} 

Let $P$ be a polyhedron in $V := M_\Q$.  Then there exist meromorphic functions $S(P)$ and $I(P)$ on $V^*$ (i.e. elements of $\hat{\Lambda}_\Q$) with the following properties:

\begin{itemize}
\item
If $P$ contains a straight line, then $S(P)=I(P)=0$.
\item
$S$ (resp. $I$) is a {\em valuation} (resp. a {\em solid valuation}). That is, 
if the characteristic functions of a family of polyhedra satisfy a relation
$\sum_i r_i \chi(P_i)=0$, then the functions $S(P_i)$ satisfy
the relation 
$\sum_i r_i S(P_i)=0$ (resp. restrict the sum to those $P_i$ that do 
not lie in a proper affine subspace of $V$.)
\item
For every $v \in V$, we have 
\begin{equation}
I(v+P)=e^v I(P), 
\end{equation}
and 
\begin{equation}
S(v+P) = e^v S(P), \qquad v \in \La.
\end{equation}
\item
If $\xi \in V^*$ is such that $|e^{\la \xi,x \ra}|$ is integrable (resp. 
absolutely summable) over $P$, then

$$I(P)(\xi)=\int_{P}  e^{\la \xi,x \ra} dm_P(x), 
\qquad
S(P)(\xi)=\sum_{x \in P \cap \Lambda}  e^{\la \xi,x \ra}$$

where $dm_P$ denotes the relative Lebesgue measure on $\aff(P)$. The use of this measure prevents $I$ from being trivial on polyhedra that are not full-dimensional.
\end{itemize}

\end{lawrence}

Let $K$ be a non-singular cone in $V$ with primitive generators $v_1,\dots,v_k$.  Then a straightforward computation shows that 

\begin{equation}
I(K)= \frac{(-1)^k}{\prod_{i=1}^k v_i},
\qquad
S(K)= \prod_{i=1}^k  \frac{1}{1-e^{v_i}},
\end{equation}

We also recall the notion of $SI$-interpolator from \cite{GP}.

\begin{definition}

Let $V$ be a real vector space, and let $\calM(V^*)$ be the set of meromorphic functions on $V^*$.  Let $\calC$ be a set of cones in $V$.  An {\it SI-interpolator} is a map

$$\mu:\calC \rightarrow \calM(V^*)$$
such that given any rational polyhedron $P$ with $\Supp(P,F) \in \calC$ for all $F \subset P$,

$$S(P) = \sum_{F \subset P} \mu(\Supp(P,F))I(F)$$.

\end{definition}

%

We are now ready for the proofs of Theorems 3 and 4.

Let $\widehat {Z_*^T(X)}$ be the ring of power series in $Z_*^T(X)$.  Recall that for nonsingular toric varieties, the {\it equivariant Todd class} of $X$, denoted $\Td^T(X)$, has a product expression in $A_T^*(X)_\Q$. (See \cite{BV3}.)  Letting $\{B_i\}$ be the Bernoulli numbers for $1 \leq i < \infty$, then

$$\Td^T(X) = \prod_{i=1}^s (1 + \sum_{j=1}^\infty \frac{B_j}{j!} D_i^j) .$$

Using the induced ring structure on $Z_*^T(X)$, we can express $\Td^T(X)$ as a polynomial in $\{D_\sigma : \sigma \in \Sigma \}$ with coefficients in $\hat{\Lambda_\BQ}$.  The following lemma will show that the coefficient of $D_\sigma$ in this expression, which we will denote $r^\Psi(\sigma)$, is independent of the other cones in $\Sigma$.  Thus we can consider $\rsigma$ as a function of $\sigma$, independent of any fan that contains it.

\begin{lemma}

Let $F$ be any power series in $\Lambda_\Q[[D_1,\dots, D_s]]$, and let $\sigma \subset N$ be contained in two fans $\Sigma$ and $\Sigma'$, both generic with respect to $\Psi$, corresponding to toric varieties $X,X'$.  Let $F_\Sigma$, $F_{\Sigma'}$ be the images of $F$ in $Z_*(X)$, $Z_*(X')$ respectively.  Then the coefficient of $D_\sigma$ is the same in $F_\Sigma$ and $F_{\Sigma'}$.

\end{lemma}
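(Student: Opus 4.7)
The plan is to show that the coefficient of $V_\sigma$ in the reduction of $F$ modulo $I + J^\Psi$ is determined by data intrinsic to $\sigma$ and the complement map $\Psi$, independent of the ambient fan. By $\Lambda_\Q$-linearity, I would reduce to the case where $F$ is a single monomial $D_{i_1}^{a_1}\cdots D_{i_k}^{a_k}$, and then track this coefficient through the reduction algorithm used in the proof of Theorem \ref{thm.ringstructure}.

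The key structural observation is a \emph{support-monotonicity} property of the reduction. Each application of a generating relation of $J^\Psi$ rewrites a monomial $M$ with ray-support $\sigma'(1)$ as a sum of two types of terms: monomials with the same support $\sigma'(1)$ and strictly smaller total exponent (multiplied by an element of $M_\Q \subset \Lambda_\Q$), together with monomials with enlarged support $\sigma'(1) \cup \{\rho_j\}$ for some $\rho_j \notin \sigma'(1)$. In particular, the ray-support of any monomial appearing during reduction is non-decreasing.

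This monotonicity pins down which branches contribute to the $V_\sigma$-coefficient: a branch contributes only if its final squarefree reduction is a scalar multiple of $\prod_{\rho_i\in\sigma(1)}D_i$, which forces every monomial along that branch to have support inside $\sigma(1)$. Any branch that acquires a ray outside $\sigma(1)$ either dies modulo the Stanley--Reisner ideal $I$ (if the enlarged support fails to span a cone of $\Sigma$) or reduces to a multiple of $V_\tau$ for some $\tau \supsetneq \sigma$ (if it does), and in neither case contributes to the $V_\sigma$-coefficient; such branches may be pruned from the computation.

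The pruned sub-algorithm that computes the $V_\sigma$-coefficient therefore uses only the following data: the rays $\sigma(1)$ and their primitive generators $n_i$, the elements $m \in \Psi(\sigma')$ for faces $\sigma' \prec \sigma$, and the pairings $\ip{m}{n_i}$ for $\rho_i \in \sigma(1)$. All of this is intrinsic to $\sigma$ and $\Psi$ and common to $\Sigma$ and $\Sigma'$. The main subtlety is that the choice of $m \in \Psi(\sigma')$ at each step is not unique, and different choices could in principle yield different reduction paths; this is resolved by Theorem \ref{thm.ringstructure}, which guarantees that all reduction paths yield the same element in $\Lambda_\Q[D_1,\dots,D_s]/(I+J^\Psi)$ and therefore the same $V_\sigma$-coefficient. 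Consequently the $V_\sigma$-coefficient of $F$ agrees in $\Sigma$ and $\Sigma'$.
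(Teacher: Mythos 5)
Your proposal is correct, and the underlying mechanism is the same locality principle the paper's proof rests on: the coefficient of $V_\sigma$ can only be affected by data attached to $\sigma$ and its faces, because ray-supports never shrink under multiplication. The difference is where this monotonicity is read off. The paper argues directly from the action formula (\ref{eq.action}) --- $D\cdot V_\sigma$ is supported on $\sigma$ and the cones $\tau\rightarrow\sigma$ --- and uses commutativity to conclude that a monomial contributes to the $V_\sigma$-coefficient only if all of its rays are rays of $\sigma$, after which one restricts to the subfan consisting of $\sigma$ and its faces, which is common to $\Sigma$ and $\Sigma'$. You instead work inside the presentation of Theorem \ref{thm.ringstructure} and track the rewriting process, observing that each application of a generator of $J^\Psi$ preserves or enlarges support; this costs extra bookkeeping about termination and about independence of the reduction path (which you correctly discharge by appealing to the well-definedness guaranteed by Theorem \ref{thm.ringstructure}), but it makes completely explicit which data --- the primitive generators $n_i$ for $\rho_i\in\sigma(1)$, the sections $i^\Psi$ on faces of $\sigma$, and the resulting pairings --- the surviving branches actually consume. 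One small imprecision: a branch whose support grows to $\sigma'(1)\cup\{\rho_j\}$ with $\sigma'$ a proper face of $\sigma$ and $\rho_j\notin\sigma(1)$ need not terminate at a cone containing $\sigma$; it terminates at some cone whose ray set contains $\rho_j$, hence a cone different from $\sigma$. So your parenthetical ``$\tau\supsetneq\sigma$'' should read ``$\tau$ with $\rho_j$ among its rays, hence $\tau\neq\sigma$''; the conclusion you draw from it (no contribution to the $V_\sigma$-coefficient) is unaffected.
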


\begin{proof}

By $\ref{eq.action}$, $D_\tau * D_\sigma$ is a sum of cycles corresponding to cones containing $\sigma$.  By commutativity, it is also a sum of cycles corresponding to cones containing $\tau$.  Thus the coefficient of $D_\sigma$ in $F_\Sigma$ or $F_{\Sigma'}$ only depends on the terms of $F$ corresponding to faces of $\sigma$.  Thus in calculating that coefficient, we may restrict $\Sigma$ or $\Sigma'$ to the fan consisting of $\sigma$ itself and all its faces.  Then the coefficient must be the same in both cases.
\end{proof}

 We will use the following lemma, a version of equivariant Riemann-Roch, which says that if one replaces cycles by the corresponding exponential integrals, then the Todd class is carried to the exponential sum.  (Note this is only true for nonsingular cones.) A version of this idea first appeared in \cite{KP}; see \cite{BR} for an elementary argument. 

We will use $\stilde$, $\itilde$, and $\mutilde$ to denote $S$, $I$, and $\mu$ with the substitution $\xi\mapsto -\xi$.

\begin{lemma}

Let $K=\Cone(m_1,\dots,m_n)$ be a full-dimensional nonsingular cone in $M$ and let $\sigma=\check{K}=\Cone(v_1,...,v_n)$, with $\{v_i\}$ denoting the basis of $N$ dual to $\{m_i\}$.   Let 
$$
R_{\sigma} = \frac{\Lambda_\Q[D_1,...D_n]}{J}, \ \ \  \ \ \ \ \ \ J=\biggl\langle \sum\ip{m}{v_i}D_i - m \ : \ m\in M \biggr\rangle .
$$
Let $\hat{R}_\sigma$ denote the completion of $R_\sigma$, and let $\hat{\Lambda_\Q}$ be the field of Laurent series over $M$.  Then there is a $\Lambda_\Q$-linear map
$\Phi: \hat{R}_\sigma \rightarrow \hat{\Lambda_\Q}$ such that
\begin{itemize}

\item For any subset $T\subset \{1,\dots,n\}$, with $K_T=\{ m\in K | \ip{m}{v_i} = 0, i\in T\}$, we have
$$
\Phi\bigl(\prod_{i\in T} D_i\bigr) = \itilde(K_T)
$$
\item and
$$
\Phi\biggl(\prod\frac{D_i}{1-e^{-D_i}}\biggr) = \stilde(K)
$$

\end{itemize}

\end{lemma}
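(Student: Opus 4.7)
The key observation is that the defining relations of $R_\sigma$ trivialize thanks to the duality $\ip{m_j}{v_i}=\delta_{ij}$. Taking $m=m_j$ in the relation $\sum_i \ip{m}{v_i} D_i - m = 0$ gives immediately $D_j = m_j$, and every other relation (for $m = \sum a_j m_j$) is a linear consequence. Hence the assignment $D_i \mapsto m_i$ induces a $\Lambda_\Q$-algebra isomorphism $R_\sigma \cong \Lambda_\Q$, which passes to completions as $\hat R_\sigma \cong \hat\Lambda_\Q$ (interpreting $\Lambda_\Q$ as $\mathrm{Sym}(M_\Q)$, consistent with the equivariant-cohomology framework of the paper).

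My plan is to define $\Phi$ as this identification followed by multiplication by the fixed Laurent-series element $1/\prod_i m_i$, so that explicitly $\Phi(F) := F|_{D_i \mapsto m_i}/\prod_i m_i$. This is $\Lambda_\Q$-linear by construction, and the infinite product $\prod_i D_i/(1-e^{-D_i})$ converges in $\hat R_\sigma$ since each factor has constant term $1$; multiplication by the fixed meromorphic $1/\prod_i m_i$ then lands in the target Laurent-series field.

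Both required identities then reduce to direct computations using the nonsingular-cone formulas for $I$ and $S$ recorded just before the lemma. For the first, $K_T = \Cone(m_j : j \notin T)$ is itself a nonsingular cone with primitive generators $\{m_j\}_{j \notin T}$, so $I(K_T) = (-1)^{n-|T|}/\prod_{j\notin T} m_j$; applying $\xi \mapsto -\xi$ absorbs the sign to give $\itilde(K_T) = 1/\prod_{j\notin T} m_j$, which matches $\Phi(\prod_{i\in T} D_i) = (\prod_{i\in T} m_i)/(\prod_i m_i)$. For the second, $S(K) = \prod_i 1/(1-e^{m_i})$ gives $\stilde(K) = \prod_i 1/(1-e^{-m_i})$, which matches $\Phi(\prod_i D_i/(1-e^{-D_i})) = (\prod_i m_i/(1-e^{-m_i}))/\prod_i m_i$.

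The main (and essentially only) obstacle is the bookkeeping: setting up the identification $R_\sigma \cong \Lambda_\Q$ carefully and interpreting $\hat\Lambda_\Q$ as a Laurent-series field in which $1/\prod_i m_i$ is defined. Once these are in place, the lemma becomes essentially a restatement of the classical nonsingular formulas of Lawrence's Theorem in the algebraic language needed to apply the Todd class formula $\prod D_i/(1-e^{-D_i})$ inside $\hat R_\sigma$.
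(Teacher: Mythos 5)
Your proposal is correct and is essentially the paper's own proof: the paper defines $\Phi(\gamma(D_1,\dots,D_n)) = P^{-1}\gamma(m_1,\dots,m_n)$ with $P=\prod_i m_i$, checks that $J$ maps to zero, and verifies the two identities by the same direct computations with the nonsingular-cone formulas for $I$ and $S$. Your additional remarks (the explicit isomorphism $R_\sigma\cong\Lambda_\Q$ via $D_j\mapsto m_j$, and the sign bookkeeping in $\itilde(K_T)$) are correct refinements of steps the paper leaves implicit.
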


\begin{proof} For any power series $\gamma$, 
map $\gamma(D_1, \dots, D_n)$ to $P^{-1}\gamma(m_1, \dots, m_n)$, where  $P=\prod_{i=1}^n m_i$ and extend by $\Lambda_\Q$-linearity.
We check that $J$ maps to 0:
$$
\Phi\bigl(\sum\ip{m}{v_i}D_i - m\bigr) = P^{-1}\bigl(\sum\ip{m}{v_i}m_i - m\bigr) = 0.
$$ 
Additionally,
$$
\Phi\bigl(\prod_{i\in T} D_i\bigr)=\prod_{i\notin T} m_i^{-1} = \itilde(K_T),
$$
the last equality using the fact that $K_T=\Cone(\{m_i | i\notin T \})$.
Finally,
$$
\Phi\biggl(\prod\frac{D_i}{1-e^{-D_i}}\biggr) = \prod \frac1{1-e^{-m_i}} = \stilde(K).
$$

\end{proof}

We now prove Theorem 3, Theorem 4 and Corollary 1.3 together.  We will first see that the interpolator equation of Corollary 1.3 holds for nonsingular cones.  This will enable us to argue inductively that the equation of Theorem 4 holds for nonsingular cones.  Once this relation between $r$ and $\mu$ is established in the nonsingular case, the fact that $r$ extends to a well-defined additive map on singular cones follows from the corresponding property of $\mu$. 

We now proceed to use Lemma 2.2 to prove the equation of Corollary 1.3 in the case that $K$ is a nonsingular $n$-dimensional cone.  We use the notation of the lemma. Note that by construction of the $\r(\sigma)$, it follows that we have the following equation in $Z_*^T(X)$:
$$
\prod\frac{D_i}{1-e^{-D_i}} = \sum_{\tau} r^\Psi (\tau) D_{\tau}.
$$
Applying the natural map $Z_*^T(X)\rightarrow R_\sigma$ followed by $\Phi$, we get
$$
\stilde(K) = \sum_{\tau\faceof\sigma} r^\Psi(\tau) \itilde(K_\tau),
$$
where $K_\tau=K\cap\tau^\bot$ is the face of $K$ dual to $\tau$.  For this face $F=K_\tau$, we have that the dual to the supporting cone $\Supp(K,F)$ is $\sigma_{K,F} = \tau$.  Thus we see that the equation of Corollary 1.3 holds in the case that $K$ is a full-dimensional nonsingular cone.

Our next lemma states that for cones $\sigma$ which are not necessarily of maximum dimension,  $r^\Psi(\sigma)$ can be computed by viewing $\sigma$ as a top-dimensional cone in the subspace $N_{\sigma}$ and then applying the inclusion $i^\Psi:M_{\sigma, \Q}\rightarrow M_\Q$.  We note that a complement map  on $N$  induces a complement map on all $\Psi$-generic sublattices, including $N_\sigma$.

\begin{lemma}
\lbl{lemma.lowerdim}

Let $\Psi$ be a complement map on $N$ and suppose that $\sigma$ is a $\Psi$-generic nonsingular cone in $N$. Let $\Psibar$ be the induced complement map on $N_\sigma$.  Let
 $\tilde{\sigma}=\sigma$, but considered as a cone in $N_{\sigma}$. Then
 $$
 r^\Psi(\sigma) = i^\Psi(r^{\Psibar}(\sigma_0)).
 $$

\end{lemma}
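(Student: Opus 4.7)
The plan is to reduce both sides to cycle-ring computations in two closely related fans---one in $N$ and one in $N_{\sigma}$---and then to construct a ring homomorphism between the two cycle rings induced by $i^{\Psi}$.

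First, let $\Sigma_\sigma$ denote the fan in $N$ consisting of $\sigma$ together with all its faces, and let $\tilde\Sigma_\sigma$ be the corresponding fan in $N_\sigma$. By the previous lemma on locality of the $r^\Psi$-coefficients, $r^\Psi(\sigma)$ is the coefficient of $V_\sigma$ in the expansion of $\prod_{i=1}^k D_i/(1-e^{-D_i})$ inside $\hat{Z}^T_*(X_{\Sigma_\sigma})$, where $k = \dim\sigma$ and $D_1,\dots,D_k$ correspond to the one-dimensional faces of $\sigma$. Similarly, $r^{\Psibar}(\tilde\sigma)$ is the coefficient of $V_{\tilde\sigma}$ in the same formal product evaluated in $\hat{Z}^T_*(X_{\tilde\sigma})$, with $\tilde\sigma$ viewed as the unique full-dimensional cone in $\tilde\Sigma_\sigma$.

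Next, construct a ring homomorphism $\phi: Z^T_*(X_{\tilde\sigma}) \to Z^T_*(X_{\Sigma_\sigma})$ using the presentations from Theorem \ref{thm.ringstructure}. On the polynomial presentations, $\phi$ sends $D_i \mapsto D_i$ and extends the inclusion $i^\Psi: M_{\sigma,\BQ} \hookrightarrow M_\BQ$ to a ring map $\Lambda_{\sigma,\BQ} \to \Lambda_\BQ$ on coefficients. To check that $\phi$ descends to the quotients: the Stanley-Reisner ideal $I$ is identical in both presentations under the natural identification of cones of $\Sigma_\sigma$ with those of $\tilde\Sigma_\sigma$, so it suffices to verify $\phi(J^{\Psibar}) \subset J^\Psi$. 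A generator of $J^{\Psibar}$ has the form $D_{i_1}\cdots D_{i_j}\bigl(\sum_l \langle m,\bar n_l\rangle D_l - m\bigr)$ for a face $\tau = \rho_{i_1}+\cdots+\rho_{i_j} \preceq \tilde\sigma$, $m \in \Psibar(\tilde\tau)$, and $\bar n_l$ the primitive generator of $\rho_l$ in $N_\sigma$. Under $\phi$, this becomes $D_{i_1}\cdots D_{i_j}\bigl(\sum_l \langle m, \bar n_l \rangle D_l - i^\Psi(m)\bigr)$. By transitivity of the complement map, $i^\Psi(m) \in i^\Psi(\Psibar(\tilde\tau)) = \Psi(\tau)$; and since $\bar n_l = n_l \in N_\sigma \subset N$ and $i^\Psi$ is a section of the restriction $M_\BQ \twoheadrightarrow M_{\sigma,\BQ}$, we have $\langle m,\bar n_l\rangle = \langle i^\Psi(m), n_l \rangle$. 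Hence the image is a generator of $J^\Psi$, and in particular $\phi(V_{\tilde\tau}) = V_\tau$ for every $\tau \preceq \sigma$.

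Finally, since $\phi$ is a ring homomorphism fixing each $D_i$, it sends $\prod_{i=1}^k D_i/(1-e^{-D_i}) \in \hat{Z}^T_*(X_{\tilde\sigma})$ to the identical product in $\hat{Z}^T_*(X_{\Sigma_\sigma})$. Expanding both sides in terms of their respective $V$-bases and comparing the $V_\sigma$-coefficients, using that $\phi$ sends the $V_{\tilde\sigma}$-component to $V_\sigma$ with coefficient transformed by $i^\Psi$, yields the desired identity $r^\Psi(\sigma) = i^\Psi(r^{\Psibar}(\tilde\sigma))$. The main technical obstacle is the well-definedness of $\phi$ on the quotient rings, which reduces entirely to the transitivity identity $i^\Psi \circ i^{\Psibar} = i^\Psi$ between the sections supplied by $\Psi$ and $\Psibar$, together with the compatibility of the lattice pairings $M_\BQ \times N \to \BQ$ and $M_{\sigma,\BQ} \times N_\sigma \to \BQ$ along the inclusion $N_\sigma \subset N$.
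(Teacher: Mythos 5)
Your proposal is correct and follows essentially the same route as the paper's own proof: both restrict to the fan of $\sigma$ and its faces, build the ring map $Z^T_*(X_{\tilde\sigma})\to Z^T_*(X_{\Sigma_\sigma})$ induced by $i^\Psi$ via the presentations of Theorem \ref{thm.ringstructure}, check well-definedness using transitivity of the complement map (that $J^{\Psibar}$ lands in $J^\Psi$), and compare coefficients of the square-free expansion of the Todd product. You have simply made explicit the verification that the paper leaves as ``easy to see.''
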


\begin{proof}
Let $X$ be the toric variety corresponding to the fan given by $\sigma$, and all of its faces, in $N$, and let $X_\sigma$ correspond similarly to $\sigma_0 \subset N_\sigma$.  Extend the inclusion map $i^\Psi: M_{\sigma, \Q} \rightarrow M_\Q$ to a map from $\Lambda_{\sigma, \Q} := \Q[M_\sigma]$ to $\Lambda_\Q := \Q[M]$.  This extends further to a natural map from $Z^T_*(X_\sigma)$ to $Z^T_*(X)$, using the characterization in Theorem \ref{thm.ringstructure}.  The well-definedness of this map hinges on the transitivity of the inclusions $i^\Psi$, which is part of the definition of a complement map.  This ensures that the ideal $J^\Psi_\sigma$ is sent to $J^\Psi$.

It is easy to see that this map preserves the product expression of the Todd class, and since the squarefree expression of the Todd class is unique in each ring, the map must preserve these expressions as well.  In particular, the coefficient of $V_{\tilde{\sigma}}$ is taken to the coefficient of $V_{\sigma}$.
\end{proof}

We now show that $r^\Psi(\sigma) = \mutilde^\Psi(\check{\sigma})$ holds for non-singular cones, which we argue by inducting both on $\dim M$ and $\dim \sigma$.  If $\sigma$ is full-dimensional, then we have seen that 
$$
\stilde(K) = \sum_{\tau\faceof\sigma} r^\Psi(\tau) \itilde(K_\tau),
$$
where $K=\check{\sigma}$, and $K_\tau= K \cap\tau^\bot$.  Since $\mu$ is an interpolator, we also have
$$
\stilde(K) = \sum_{\tau\faceof\sigma} \mutilde^\Psi(\check{\tau}) \itilde(K_\tau).
$$
For proper faces $\tau$ of $\sigma$, we may assume by induction, that $r^\Psi(\tau) = \mutilde^\Psi(\check{\tau})$, and it follows that $r^\Psi(\sigma) = \mutilde^\Psi(\check{\sigma})$ as well.

Now suppose that $\sigma$ is non-singular but not full-dimensional.  We apply Lemma \ref{lemma.lowerdim}.  With the notation of that lemma, we have
$$
r^\Psi(\sigma) = i^\Psi(r^{\Psibar}(\sigma_0)).
$$
Since the interpolator $\mu^\Psi$ is $\Psi$-hereditary (see \cite{GP}), we also have
$$
\mutilde^\Psi(\check{\sigma}) = i^\Psi(\mutilde^{\Psibar}(\check{\sigma_0})).
$$
But by induction,  $r^{\Psibar}(\sigma_0)=\mutilde^{\Psibar}(\check{\sigma_0})$.  Thus it follows that $r^\Psi(\sigma) = \mutilde^\Psi(\check{\sigma})$.

At this point, we have established that  $r^\Psi(\sigma) = \mutilde^\Psi(\check{\sigma})$ holds for nonsingular cones.  By the additive property of $\mutilde$, it follows that $\r$ may be extended to a well-defined  additive function on all cones (independent of subdivision), such that  $r^\Psi(\sigma) = \mutilde^\Psi(\check{\sigma})$ for all $\sigma$.  The analytic and interpolator properties of $\r$ follow from those of $\mu^\Psi$.  This completes the proof of Theorems 3 and 4 and Corollary 1.3.

Finally Corollary 1.4 follows directly from Theorem 4 together with the final assertion of Theorem 3.

\section{Examples}

In this section, we illustrate theorems of this paper by computing explicit formulas for $r^{\Psi}(\sigma)$ for nonsingular cones of dimension 2 or less with an arbitrary complement map $\Psi$.   We also match these formulas with those of \cite{GP}.

Our first two propositions concern cones of dimensions $0$ and $1$ in an arbitrary lattice.

\begin{proposition}
\lbl{prop.0d}

For any complement map $\Psi$ on $N$, we have $r^\Psi({0}) = 1$.

\end{proposition}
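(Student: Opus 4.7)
The plan is to reduce the computation to the simplest possible setting by invoking the locality of $r^\Psi$ established in Lemma 2.1. Since $r^\Psi(\sigma)$ depends only on $\sigma$ itself and its faces (and not on any ambient fan), the value $r^\Psi(\{0\})$ may be computed using the trivial fan $\Sigma = \{\{0\}\}$, whose associated toric variety is simply the torus $T$.

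In this setting, I would first observe that there are no $1$-dimensional cones in $\Sigma$, so the product in formula (\ref{eq.toddprod}) is an empty product and hence equals $1$. Thus $\Td^T(X) = 1$ in $\widehat{Z^T_*(X)}$. On the other hand, the only cone in $\Sigma$ is $\{0\}$, so the cycle expansion $\Td^T(X) = \sum_{\sigma \in \Sigma} r^\Psi(\sigma) V_\sigma$ collapses to a single term $r^\Psi(\{0\}) V_{\{0\}}$. The cycle $V_{\{0\}}$ represents the whole variety $X = T$, and serves as the multiplicative identity in $Z^T_*(X)$, as recalled in the proof of Theorem \ref{thm.ringstructure}, where it is noted that $V_{\{0\}}$ generates $Z^T_*(X)$ as a cyclic $\Lambda_\Q[D_1,\dots,D_s]$-module.

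Equating the two expressions for $\Td^T(X)$ then gives $1 = r^\Psi(\{0\}) \cdot V_{\{0\}}$, forcing $r^\Psi(\{0\}) = 1$. There is no substantive obstacle here: the statement is essentially a direct bookkeeping consequence of the definitions applied in the smallest possible case, with the real content having already been absorbed into the locality lemma and the well-definedness of $r^\Psi$ guaranteed by Theorem \ref{thm.rsigma}.
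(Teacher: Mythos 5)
Your argument is correct and is in substance the same as the paper's: both come down to the observation that only the constant term of the Todd product can contribute to the coefficient of $V_{\{0\}}$. The paper phrases this directly (every non-constant term is divisible by some $D_i$ and therefore evaluates to a combination of cycles $V_\tau$ with $\tau \ne \{0\}$), whereas you route it through the locality lemma and the empty product over the trivial fan; either packaging is fine.
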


\begin{proof}
The constant term in the expansion of Equation (\ref{eq.toddprod}) is 1.  All other terms in this expansion are divisible by some $D_i$, which will remain true when the term is evaluated in $Z_*^T(X)$.  Thus these terms do not contribute to $r^\Psi({0})$.
\end{proof}

To state the formula for $r^\Psi({\sigma})$ where $\sigma$ is a cone of dimension greater than $0$, it will be useful to introduce  notation for the meromorphic function
$$
B(z) = \frac{1}{1-exp(z)} + \frac1z.
$$
Note that if we let $g(z) = \frac{z}{1-\exp(-z)}$ be the analytic function that defines the Todd class, then we have
\begin{equation}
\lbl{eq.gB}
g(z) = 1+ z B(-z)
\end{equation}

\begin{proposition}
\lbl{prop.1d}

Let $\Psi$ be a complement map on a lattice  $N$, and suppose $\sigma = \cone(\rho)$ is a one-dimensional cone generated by a primitive element $\rho\in N$.  Assume that  $\sigma$ is in the domain of $\Psi$ and let $c\in M$ be a generator of the one-dimensional space $\Psi(\sigma)$.  Then
$$
   r^\Psi(\sigma)= B\biggl(-\frac{c}{\ip{c}{\rho}}\biggr)
$$
\end{proposition}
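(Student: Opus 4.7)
The plan is to reduce to the one-dimensional case via Lemma \ref{lemma.lowerdim} and then compute $r^\Psi(\sigma)$ directly using the explicit ring presentation of Theorem \ref{thm.ringstructure}.

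By Lemma \ref{lemma.lowerdim}, $r^\Psi(\sigma) = i^\Psi(r^{\Psibar}(\sigma_0))$, where $\sigma_0$ is $\sigma$ viewed as a top-dimensional cone in $N_\sigma$ and $\Psibar$ is the induced complement map on $N_\sigma$. The generator $c \in \Psi(\sigma) \subset M_\BQ$ is the image under $i^\Psi$ of a generator $\bar c$ of $M_{\sigma,\BQ}$, and pairings are preserved: $\ip{\bar c}{\rho} = \ip{c}{\rho}$. Thus it suffices to prove the formula in the case where $N$ has rank one and $\sigma$ is top-dimensional, in which case $\Psi(\sigma) = M_\BQ$.

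With $N$ of rank one, take the fan consisting of $\sigma$ and $\{0\}$, so that $X \cong \mathbb{A}^1$. Write $D := V_\sigma$ for the generator $D_1$ in Theorem \ref{thm.ringstructure}. The Stanley-Reisner ideal $I$ is zero, and $J^\Psi$ is generated by the single element $D(\ip{c}{\rho}D - c)$, since the cone $\{0\}$ contributes nothing ($\Psi(\{0\}) = 0$). Setting $\alpha := c/\ip{c}{\rho} \in M_\BQ$, this gives the relation $D^2 = \alpha D$ in $Z_*^T(X)$, whence $D^k = \alpha^{k-1} D$ for all $k \geq 1$.

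By Equation (\ref{eq.toddprod}), $\Td^T(X) = g(D)$ where $g(z) = z/(1-e^{-z})$. Since $g(0) = 1$, expanding $g$ as a formal power series and applying the relation $D^k = \alpha^{k-1}D$ yields
$$g(D) = 1 + \frac{g(\alpha) - 1}{\alpha}\, D$$
in the completion of $Z_*^T(X)$. Under the identifications of Theorem \ref{thm.ringstructure}, $1$ corresponds to $V_{\{0\}}$ and $D$ corresponds to $V_\sigma$, so one reads off $r^\Psi(\sigma) = (g(\alpha)-1)/\alpha$. Finally, Equation (\ref{eq.gB}) gives $g(z) = 1 + zB(-z)$, hence $(g(\alpha)-1)/\alpha = B(-\alpha) = B(-c/\ip{c}{\rho})$, as required. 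The main bookkeeping point is the interpretation of $\alpha$ as an element of $M_\BQ$ and of the formal quotient $(g(\alpha)-1)/\alpha$ as the power series $B(-\alpha) \in \hat\Lambda_\BQ$; with those identifications in place, the argument reduces to a one-line calculation in the quotient ring.
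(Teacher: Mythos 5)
Your proof is correct and follows essentially the same route as the paper: both derive the relation $D^2 = \frac{c}{\ip{c}{\rho}}D$ from the ideal $J^\Psi$ of Theorem \ref{thm.ringstructure}, conclude $D^k = \alpha^{k-1}D$, and read off the coefficient of $D$ in $g(D)$ via $g(z) = 1 + zB(-z)$. The only difference is cosmetic: the paper computes the coefficient of $D_1$ directly in an ambient fan by working modulo $D_j$ for $j>1$, whereas you first reduce to the rank-one lattice via Lemma \ref{lemma.lowerdim}; both reductions are legitimate.
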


\begin{proof}

Taking $\rho=\rho_1$, we expand Equation (\ref{eq.toddprod}) to find the coefficient of $D_1$, working modulo $D_j, j>1$.   According to Theorem 2, we find the relation:
$$
D_1 (\ip{c}{\rho} D_1 - c) = 0,
$$
from which it follows that $D_1^2=\frac{c}{\ip{c}{\rho}} D_1$,  and hence
$$
D_1^i = \biggl(\frac{c}{\ip{c}{\rho}}\biggr)^{i-1} D_1
$$
for all $i\geq 1$.  Hence, using Equation (\ref{eq.gB}), we find that
$$
g(D_1) =1+ B\biggl(-\frac{c}{\ip{c}{\rho}}\biggr) D_1.
$$
The proposition follows.
\end{proof}

\begin{proposition}
\lbl{prop.2d}

Let $\Psi$ be a complement map on a two-dimensional lattice $N$, and suppose $\sigma = \cone(\rho_1, \rho_2)$ is a two-dimensional non-singular cone  generated by a primitive elements $\rho_1, \rho_2\in N$.  Let $m_1, m_2$ be the dual basis of $M$, so that $\ip{m_i}{\rho_j}=\delta_{i,j}$  Assume that  $\sigma$ is in the domain of $\Psi$ and for $i=1,2$, let $c_i\in M$ be a generator of the one-dimensional space $\Psi(\rho_i)$.  Then
$$
   r^\Psi(\sigma)= B(-m_1) B(-m_2) -\frac1{m_2} \biggl(  B\biggl(-\frac{c_1}{\ip{c_1}{\rho_1}} \biggr)-   B(-m_1) \biggr)
   -\frac1{m_1} \biggl(  B\biggl(-\frac{c_2}{\ip{c_2}{\rho_2}} \biggr)-   B(-m_2) \biggr)
$$
\end{proposition}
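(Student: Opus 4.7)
The plan is to work directly in the completed equivariant cycle ring $\widehat{Z^T_*(X)}$ for the toric variety $X$ attached to the fan consisting of $\sigma$ and its faces, expand the product formula $\Td^T(X) = g(D_1) g(D_2)$ with $g(z) = 1 + zB(-z)$, and read off the coefficient of $V_\sigma = D_1 D_2$ after reducing to Stanley--Reisner form. First I would extract the key reduction relations from Theorem~\ref{thm.ringstructure}. Writing $c_i = \ip{c_i}{\rho_1}\, m_1 + \ip{c_i}{\rho_2}\, m_2$ and setting $\alpha_i := c_i/\ip{c_i}{\rho_i}$ and $\beta_i := \ip{c_i}{\rho_{3-i}}/\ip{c_i}{\rho_i}$, the relation in $J^\Psi$ indexed by $\rho_i$ and $m = c_i$ gives
\[
D_i^2 = \alpha_i D_i - \beta_i D_1 D_2 \qquad (i=1,2).
\]
Meanwhile, since $\sigma$ is full-dimensional and nonsingular, $\Psi(\sigma) = M_\Q$, so the relation in $J^\Psi$ indexed by $\sigma$ with $m = m_1$ and $m = m_2$ yields
\[
D_1^2 D_2 = m_1 D_1 D_2, \qquad D_1 D_2^2 = m_2 D_1 D_2.
\]

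Next I would compute the reduction of $D_1 B(-D_1)$ to Stanley--Reisner form. A straightforward induction using both families of relations shows that for $n \geq 1$,
\[
D_1^n = \alpha_1^{n-1} D_1 - \beta_1 \frac{\alpha_1^{n-1} - m_1^{n-1}}{\alpha_1 - m_1} D_1 D_2,
\]
and summing the power series $D_1 B(-D_1) = \sum_{k \geq 1} b_{k-1}(-1)^{k-1} D_1^k$ gives
\[
D_1 B(-D_1) = B(-\alpha_1)\, D_1 - \beta_1 \cdot \frac{B(-\alpha_1) - B(-m_1)}{\alpha_1 - m_1} \cdot D_1 D_2.
\]
The simplifying identity that makes the final formula match is the observation that $\alpha_1 - m_1 = \beta_1 m_2$ (and symmetrically $\alpha_2 - m_2 = \beta_2 m_1$), so that $\beta_1/(\alpha_1 - m_1) = 1/m_2$. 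An analogous computation handles $D_2 B(-D_2)$.

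For the cross term $D_1 D_2 \cdot B(-D_1) B(-D_2)$, I would use the full-dimensional relations iteratively to obtain $D_1^{n+1} D_2^{k+1} = m_1^n m_2^k D_1 D_2$ for all $n, k \geq 0$, so that term by term
\[
D_1 D_2 \cdot B(-D_1) B(-D_2) = \sum_{n,k \geq 0} b_n b_k (-1)^{n+k} m_1^n m_2^k D_1 D_2 = B(-m_1) B(-m_2)\, D_1 D_2.
\]
Finally, I would expand $(1 + D_1 B(-D_1))(1 + D_2 B(-D_2))$, collect the coefficient of $D_1 D_2$, and invoke Lemma~2.1 (independence of ambient fan) to identify this coefficient with $r^\Psi(\sigma)$, yielding the stated formula. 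The main potential obstacle is bookkeeping: ensuring the induction on $D_1^n$ is carried out with both relations applied in the correct order, and verifying the key linear identity $\alpha_i - m_i = \beta_i m_{3-i}$ which transforms the intermediate expression into the form advertised in the proposition.
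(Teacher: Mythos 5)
Your proposal is correct and follows essentially the same route as the paper's own proof: the same two families of relations from Theorem~\ref{thm.ringstructure}, the same induction giving the closed form for $D_1^n$, the same key identity $\alpha_1 - m_1 = \beta_1 m_2$ (the paper writes $L_1 - m_1 = \tfrac{\ip{c_1}{\rho_2}}{\ip{c_1}{\rho_1}} m_2$), and the same collection of the $D_1D_2$ coefficient in $(1+D_1B(-D_1))(1+D_2B(-D_2))$. Only the notation differs.
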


In the equation above, note that in spite of the appearance of fractions $\frac1{m_i}$ on the right hand side, the expression given is actually a power series.  Indeed, using the fact that $B(z) - B(w)$ is divisible by $z-w$, one easily sees that these denominators cancel.

\begin{proof}
We wish find the coefficient of $D_1D_2$ in $g(D_1) g(D_2)$ working in the ring $Z_*^T(X)$ modulo $D_j, j>2$. We have the relations
$$
D_1(\ip{c_1}{\rho_1} D_1 + \ip{c_1}{\rho_2} D_2 - c_1) = 0, \ \ \ \ \ D_2(\ip{c_2}{\rho_1} D_1 + \ip{c_2}{\rho_2} D_2 -c_2) = 0,
$$
and
$$
   D_1D_2(D_1- m_1) = 0  \ \ \ \ \ \ \ \  D_1D_2(D_2- m_2) = 0.
$$

These imply, for $i,j>0$, that
$$
D_1^i D_2^j = m_1^{i-1} m_2^{j-1}  D_1 D_2
$$

We also get
$$
D_1^2 = -\frac{\ip{c_1}{\rho_2}}{\ip{c_1}{\rho_1}} D_1 D_2 + L_1 D_1 \ \ \ \ \ 
$$
where $L_j$ is defined as $\frac1{\ip{c_j}{\rho_j}} c_j$.   By induction, it follows that for $i\geq 2$,
$$
D_1^i = -\frac{\ip{c_1}{\rho_2}}{\ip{c_1}{\rho_1}} ( m_1^{i-2} + m_1^{i-3} L_1+ \cdots + L_1^{i-2} ) D_1 D_2 + L_1^{i-1} D_1 \ \ \ \ \ 
$$
Now using $ m_1^{i-2} + m_1^{i-3} L_1+ \cdots + L_1^{i-2}= \frac{L_1^{i-1}-m_1^{i-1}}{L_1-m_1}$ and $L_1-m_1=\frac{\ip{c_1}{\rho_2}}{\ip{c_1}{\rho_1}} m_2$, 
$$
D_1^i = - \frac{1}{m_2} (  L_1^{i-1}-m_1^{i-1} ) D_1 D_2 +  L_1^{i-1} D_1,
$$
which holds also for $i=1$.
Hence
$$
D_1 B(-D_1) =- \frac{1}{m_2} (  B(-L_1)-B(-m_1)) D_1 D_2 +  B(-L_1) D_1,
$$
with a similar formula for $D_2 B(-D_2)$. Thus
the coefficient of $D_1 D_2$ in $ g(D_1) g(D_2) = (1 + D_1 B(-D_1)) ((1 + D_2 B(-D_2))$ is given by
$$
 -\frac{1}{m_2}(B(-L_1)-B(-m_1)) -  \frac{1}{m_1}(B(-L_2)-B(-m_2)) + B(-m_1)B(-m_2),
 $$
 as desired.

\end{proof}

We note that Propositions \ref{prop.1d} and \ref{prop.2d} allow one to check directly, for nonsingular cones $\sigma$ of dimension at most 2, the agreement of this paper's $r^\Psi$ with the $\mu^\Psi$ of \cite{GP}, as asserted in Theorem  \ref{thm.gpagree}.   Indeed, with the substitution of $\xi$ for $-\xi$, one finds exact agreement of Proposition $\ref{prop.1d}$ above with \cite{GP}, Propostion 5.3, in both the complete flag and inner product cases.  Similarly, in dimension 2, one sees agreement between Proposition \ref{prop.2d} and \cite{GP}, Proposition 5.4.

 \begin{example}

We consider a nonsimplicial fan $\Sigma$  in $N=\BZ^3$ as follows.  Let  $\rho_1=(1,0,0),\rho_2=(0,1,0),\rho_3=(0,0,1),\rho_4=(1,1,-1)$, and $\rho_5=(-1,-1,0)$.  The cone generated by $\rho_i,\rho_j, \dots, \rho_k$ will be denoted  $\sigma_{i,j,\dots,k}$, with $\sigma_\emptyset = \{0\}.$  Consider the fan $\Sigma$ with rays $\rho_i, i=1,\dots, 5$, whose maximal cones are $\sigma_{1234}, \sigma_{135}, \sigma_{145}, \sigma_{235}$, and $\sigma_{245}$.  Then $\Sigma$ is complete fan with  eight two-dimensional cones $\sigma_{13}$, $\sigma_{14}$, $\sigma_{15}$, $\sigma_{23}$, $\sigma_{24}$, $\sigma_{25}$, $\sigma_{35}$, $\sigma_{45}$.   Let $X=X_{\Sigma}$, and for each cone $\sigma_{i,j,\dots,k}\in\Sigma$, let $V_{i,j,\dots,k}$ denote the corresponding cycle on $X$.  Note that affine toric variety corresponding to $\sigma_{1234}$ is the affine cone on $\P^1\times\P^1$.

Let $\Psi$ denote the complement map obtained from the standard inner product on $\BQ^3$. We will illustrate the action of $\Div_T(X)$ on $Z^T_*(X)$ given in Theorem 1.  First note that for $D\in \Div(X)$, the local equations $\{d_\sigma\}_{\sigma\in\Sigma}$ specify a continuous piecewise $\BQ$-linear function on $\Sigma$ that is determined by its values on $\rho_i, i=1,\dots, 5$.  Denote these values by $\alpha_i, i=1,\dots, 5$.  The $\alpha_i$ are any rational numbers that satisfy $\alpha_1+\alpha_2=\alpha_3+\alpha_4.$  For such $D$, we compute the $D\cdot V_{\sigma}$ for various $\sigma\in\Sigma$.

First, let $\sigma=\sigma_{\emptyset}=\{0\}$.   Then $d_\sigma^\Psi= 0$, and the formula in Theorem 1 gives
$$
D\cdot V_{\emptyset} = \alpha_1 V_1 + \alpha_2 V_2 +\alpha_3 V_3 +\alpha_4 V_4 +\alpha_5 V_5.
$$
This is simply the Weil divisor determined by the Cartier divisor $D$.

Next, let $\sigma=\sigma_1=\rho_1$.   Then using $\{m_1, m_2, m_3\}$ to denote the standard basis of $M=\BZ_3^*$, we have $d_\sigma^\Psi= \alpha_1 m_1$, and the formula in Theorem 1 gives
$$
D\cdot V_1 = \alpha_3 V_{13} + (\alpha_4-\alpha_1) V_{14} + (\alpha_1+\alpha_5) V_{15} + \alpha_1 m_1 V_1.
$$
For $i\ne 1$, $D\cdot V_i$ may be computed similarly.

Turning to a $2$-dimensional cone, say $\sigma=\sigma_{13}$, we find $d_\sigma^\Psi= \alpha_1 m_1 + \alpha_3 m_3$, and 
$$
D\cdot V_{13} = \alpha_2 V_{1234} + (\alpha_5 + \alpha_1) V_{135} + (\alpha_1 m_1 + \alpha_3 m_3) V_{13}.
$$
 
Finally, take $\sigma=\sigma_{1234}.$  Then $d_\sigma^\Psi= \alpha_1 m_1 + \alpha_2 m_2 + \alpha_3 m_3$,
 and 
 $$
 D\cdot V_{1234} = ( \alpha_1 m_1 + \alpha_2 m_2 + \alpha_3 m_3)  V_{1234}.
 $$

 \end{example}

\begin{example}

As a final example, we consider the two-dimensional nonsingular triangle  in $M=\Z^2$ with vertices $v_0=(0,0), v_1=(1,0), v_2=(0,1).$ The corresponding inner normal fan $\Sigma$ in $N=\Z^2$ has rays generated by $\rho_0=(-1,-1), \rho_1=(0,1), \rho_2=(1,0)$. The two-dimensional cones of this fan are
$\sigma_0=\Cone((0,1),(1,0))$, $\sigma_1=\Cone((-1,-1),(0,1))$, $\sigma_2=\Cone((0,1),(-1,-1))$.

Let $\Psi$ be the complement map induced by the standard inner product on $\Z^2$.  We take  $\{x=(1,0), y=(0,1)\}$, to be the standard basis of $M$, so that $\Lambda=\Z[x,y]$.  Then one calculates  according to Theorem \ref{thm.ringstructure} that the ring structure on the equivariant cycle groups is given by

$$Z^T_*(X) \cong \frac{\Lambda_\BQ[D_0,D_1,D_2]}{I + J^\Psi}$$

where $I = \langle D_0D_1D_2 \rangle $, and

$$J^\Psi = \biggl\langle   D_1(D_1-D_0-x), \  D_2(D_2-D_0-y), \  D_0(2D_0-D_1-D_2+x+y)
 \biggr\rangle.
$$

To compute the equivariant Todd class of $X_\Sigma$, one multiplies out the expression $\Pi_{i=0}^2 \frac{D_i} {1-\exp(-D_i)}$ in the completion of the above ring. Either working directly with the relations above, or using Proposition \ref{prop.2d},  one finds that the coefficient $r(\sigma_0)$ of $D_1D_2$ is given by
$$
r(\sigma_0)=B(-x)B(-y).
$$
Likewise, the coefficient  $r(\sigma_1)$ of $D_0D_2$ is 
$$
r(\sigma_1) = B(x-y)B(x)- \frac1{x}\biggl[    B(-y ) -B(x-y)  \biggr]  - \frac1{y-x}\biggl[    B(\frac12 (x+y) ) -B(x)  \biggr],
$$
and the coefficient of $D_0D_1$ is
$$
r(\sigma_2) = B(y)B(y-x)- \frac1{x-y}\biggl[    B(\frac12 (x+y) ) -B(y)  \biggr] + \frac1{y}\biggl[    B(-x ) -B(y-x)  \biggr].
$$

One notes that the $r(\sigma_i)$ may be expanded in power series about the origin (shown here to order 2):
\begin{align*}
r(\sigma_0) &= \frac14 + \frac1{24} x + \frac1{24} y + \frac1{144} xy +\cdots
\\  r(\sigma_1) &= \frac38 - \frac1{12} x + \frac1{24} y + \frac5{1152} x^2 - \frac1{288} xy - \frac5{1152} y^2 + \cdots
\\ r(\sigma_2) &=  \frac38  +  \frac1{24} x - \frac1{12} y - \frac5{1152} x^2 - \frac1{288} xy + \frac5{1152} y^2 + \cdots
\end{align*}
and one recovers the constant terms from the local lattice point formula of \cite{PT}, namely $\mu_0(\sigma_0)=\frac14,\mu_0(\sigma_1)=\frac38,\mu_0(\sigma_2)=\frac38$. (cf. \cite{GP}, Example 5.7).

Finally, one can verify by direct computation that $r(\sigma_0)+r(\sigma_1)+r(\sigma_2)=1$, in agreement with Theorem \ref{thm.rsigma}.  One can also use the above expressions for the $r(\sigma_i)$ to verify the $SI$-interpolator property of Corollary \ref{thm.rinterpolates}:
$$
 \sum_F r^{\Psi}(\sigma_{P,F})(-\xi)\cdot I(F)(\xi)=S(P)(\xi)=1+e^x+e^y.
$$

\end{example}

\ifx\undefined\bysame
        \newcommand{\bysame}{\leavevmode\hbox
to3em{\hrulefill}\,}
\fi

\end{document}